\documentclass[11pt,reqno]{amsart}

\usepackage{amsmath,amssymb,mathtools}
\usepackage{enumitem}
\usepackage{xcolor}
\usepackage[colorlinks=true,
            linkcolor=blue,
            citecolor=blue,
            urlcolor=blue]{hyperref}

\usepackage{etoolbox}

\makeatletter
\patchcmd{\@setabstracta}{\skip@20\p@}{\skip@45\p@}{}{}
\patchcmd{\@maketitle}{\dimen@34\p@}{\dimen@11\p@}{}{}
\makeatother

\allowdisplaybreaks
\numberwithin{equation}{section}
\newtheorem{theorem}{Theorem}[section]

\newtheorem{lemma}[theorem]{Lemma}
\newtheorem{corollary}[theorem]{Corollary}
\newtheorem*{question}{Question}
\theoremstyle{definition}
\newtheorem{definition}[theorem]{Definition}
\theoremstyle{remark}
\newtheorem{remark}[theorem]{Remark}

\newcommand{\Ric}{\operatorname{Ric}}
\newcommand{\Hess}{\operatorname{Hess}}
\newcommand{\Scal}{\operatorname{Scal}}
\newcommand{\tr}{\operatorname{tr}}
\newcommand{\II}{\mathrm{II}}
\newcommand{\Sph}{\mathbb S}
\newcommand{\R}{\mathbb R}
\newcommand{\Z}{\mathbb Z}
\newcommand{\D}{\mathcal D}
\newcommand{\Spin}{\operatorname{Spin}}
\newcommand{\vol}{\operatorname{vol}}

\title{Positive Bakry--\'Emery Ricci Curvature on Homotopy Spheres}
\author{Wen-Qi Li}
	\address{Wen-Qi Li , School of Mathematical Sciences, Key Laboratory of MEA (Ministry of Education) \& Shanghai Key Laboratory of PMMP, East China Normal University, Shanghai 200241, China}
	\email{51255500054@stu.ecnu.edu.cn}
\begin{document}

\begin{abstract}
Wei and Wylie asked whether a complete weighted manifold with
nonnegative Bakry--\'Emery Ricci curvature and bounded potential must
admit a Riemannian metric with nonnegative Ricci curvature.  We answer
this question negatively in every dimension \(8k+1\) and \(8k+2\),
where \(k\geq1\).  Our main geometric result is that every smooth homotopy sphere of
dimension at least seven admits a weighted core metric
\((g,e^{-f})\) with \(\Ric_g+\Hess_g f>0\).  On the other hand, in dimensions \(8k+1\) and \(8k+2\), where
\(k\geq1\), we prove that a homotopy sphere with nonzero
\(\alpha\)-invariant admits no Riemannian metric with \(\Ric\geq0\). Since homotopy spheres with nonzero \(\alpha\)-invariant exist in every dimension \(8k+1\) and \(8k+2\), and since compactness makes the
potential bounded, these manifolds provide the required
counterexamples.
\end{abstract}

\maketitle

\section{Introduction}

Let \((M^n,g,e^{-f}\,d\operatorname{vol}_g)\) be a complete smooth
metric measure space, that is, a complete Riemannian manifold
\((M^n,g)\) equipped with the weighted measure
\(e^{-f}\,d\operatorname{vol}_g\). The associated
\(\infty\)-dimensional Bakry--\'Emery Ricci tensor is
\(\Ric_f:=\Ric_g+\Hess_g f\). The weighted Laplacian is
\(\Delta_f:=\Delta-\langle\nabla f,\nabla\,\cdot\,\rangle\), and it is
formally self-adjoint on compactly supported smooth functions with
respect to the weighted measure. In the Bochner formula for
\(\Delta_f\), the ordinary Ricci tensor is replaced by \(\Ric_f\).
This observation was a starting point for the work of Bakry and
\'Emery on diffusion semigroups and functional
inequalities~\cite{BakryEmery}. The Bakry--\'Emery tensor extends the
ordinary Ricci tensor, and many comparison results remain true when
the potential satisfies suitable bounds.

Bakry--\'Emery curvature is closely connected to the synthetic theory
of lower Ricci curvature bounds. Lott--Villani and Sturm introduced
the curvature-dimension condition \(CD(K,N)\), defined by convexity
of entropy along Wasserstein geodesics
\cite{LottVillani,Sturm}. For \(q\in(0,\infty)\), the
\(q\)-Bakry--\'Emery tensor is
\(\Ric_f^q:=\Ric_g+\Hess_g f-q^{-1}df\otimes df\). On a smooth
weighted Riemannian manifold, the condition \(\Ric_f\geq Kg\)
corresponds to \(CD(K,\infty)\), while \(\Ric_f^q\geq Kg\)
corresponds to \(CD(K,n+q)\). The \(RCD(K,N)\) condition adds a
Hilbertian assumption to the \(CD(K,N)\) condition and rules out
Finsler examples. The relation between the optimal-transport,
Bochner, and heat-flow formulations was developed by
Ambrosio--Gigli--Savar\'e and Erbar--Kuwada--Sturm
\cite{AmbrosioGigliSavare,ErbarKuwadaSturm}. Smooth weighted
Riemannian manifolds are the basic smooth examples in the
\(RCD(K,N)\) theory.

The Bakry--\'Emery tensor also appears naturally in Ricci flow. The
equation \(\Ric_f=\lambda g\) is the gradient Ricci soliton equation,
and gradient Ricci solitons give self-similar solutions to the Ricci
flow introduced by Hamilton~\cite{Hamilton1982}. Perelman's entropy
and reduced-volume arguments showed that gradient shrinking Ricci
solitons are natural models in the study of Ricci-flow singularities
\cite{Perelman1,Perelman2}. These ideas played an important role in
his proof of the Poincar\'e conjecture. We refer to
\cite{CaoSolitons} for a survey of gradient Ricci solitons. Thus
Bakry--\'Emery curvature appears in diffusion theory, optimal
transport, and Ricci flow.

When \(f\) is constant, \(\Ric_f\) is the ordinary Ricci tensor. It is
therefore natural to ask which consequences of a Ricci lower bound
remain true in the weighted setting. Without a bound on \(f\),
classical results such as the Myers compactness theorem, the
Bishop--Gromov volume comparison theorem, and the Cheeger--Gromoll
splitting theorem may fail even when \(\Ric_f\) has the corresponding
lower bound. Wei and Wylie showed that many of these results can be
recovered when the potential is bounded~\cite{WeiWylie}. At the end
of their paper, they asked whether boundedness is also enough to
recover the existence of an ordinary metric with nonnegative Ricci
curvature.

\begin{question}
If \(M\) carries a complete weighted metric with \(\Ric_f\geq0\) and
bounded potential \(f\), must \(M\) admit a Riemannian metric with
\(\Ric\geq0\)?
\end{question}

Wei and Wylie noted that no counterexample was then known even without
the boundedness assumption~\cite[Question~7.5]{WeiWylie}. The
difficulty is that the Hessian term is locally flexible, while the
existence of a metric with nonnegative Ricci curvature is restricted
by global topology. A counterexample must therefore combine a
construction of positive Bakry--\'Emery Ricci curvature with a
topological obstruction to nonnegative ordinary Ricci curvature.
Surgery and gluing methods provide a natural way to search for such
examples.

For positive scalar curvature, the Gromov--Lawson surgery theorem
shows that positivity is preserved under surgeries of codimension at
least three, and related basic results were proved by Schoen and Yau
\cite{SchoenYau,GromovLawson}. There is no surgery theorem of the same
generality for positive Ricci curvature. The Ricci tensor is more
sensitive to the metric in the gluing region, and known constructions
require additional geometric control; see, for example, the work of
Sha--Yang and Wraith~\cite{ShaYang,WraithSurgery}. For connected sums,
Perelman's boundary gluing construction~\cite{Perelman} led Burdick
to introduce core metrics. A core metric gives a useful sufficient
condition for preserving positive Ricci curvature under connected
sum~\cite{Burdick}.

Reiser and Tripaldi recently developed weighted versions of these
ideas~\cite{ReiserTripaldi}. They proved a weighted analogue of
Perelman's gluing theorem, introduced weighted core metrics, and
proved surgery results for positive Bakry--\'Emery Ricci curvature.
An important feature of their higher-surgery theorem is that its
assumptions are local near the surgery sphere. As one application,
they proved that every closed simply connected spin five-manifold
admits a weighted metric with positive Bakry--\'Emery Ricci
curvature. They also noted that, at that time, no manifold was known
to separate the existence of such a weighted metric from the
existence of a Riemannian metric with positive Ricci curvature. Their
weighted gluing and core results provide the main gluing tools used in
this paper.

Our obstruction comes from the \(\alpha\)-invariant.  Throughout this
paper, a homotopy \(n\)-sphere means a closed smooth \(n\)-manifold
that is homotopy equivalent to the standard sphere \(\Sph^n\).
For \(n\geq3\), every such homotopy sphere is spin and has a unique
spin structure, so its \(\alpha\)-invariant is well defined. This invariant is the
Atiyah--Bott--Shapiro orientation
\(\alpha\colon\Omega_n^{\mathrm{Spin}}\to KO_n\)
\cite{AtiyahBottShapiro}; analytically, it is the \(KO\)-valued index
of the real Dirac operator.  Following the standard terminology, a
\emph{Hitchin sphere} is a homotopy sphere with nonzero
\(\alpha\)-invariant.

Milnor exhibited homotopy spheres that are not spin boundaries in
dimensions \(9,10,17,\) and \(18\)
\cite[Theorem~2]{Milnor1965}. Hitchin later used the \(KO\)-valued
Dirac index to show that homotopy spheres with nonzero
\(\alpha\)-invariant exist in every dimension \(8k+1\) and \(8k+2\),
where \(k\geq1\), and that their nonzero \(\alpha\)-invariant
obstructs positive scalar curvature
\cite[pp.~44--45]{Hitchin}. In these dimensions one has
\(KO_n\cong\mathbb Z/2\).

Our geometric construction applies more generally.  Using the weighted
gluing results of Reiser--Tripaldi together with new cylinder and cap
constructions, we prove that every twisted sphere of dimension at
least three admits a weighted core metric with positive
Bakry--\'Emery Ricci curvature.  In particular, every homotopy sphere
of dimension at least seven admits such a metric.  Combining this
construction with the spinorial obstruction gives our main theorem.

\begin{theorem}\label{thm:main}
Let \(n=8k+1\) or \(n=8k+2\), where \(k\geq1\), and let
\(\Sigma^n\) be a homotopy sphere with
\(\alpha(\Sigma)\neq0\) in \(KO_n\cong\Z/2\).  Then:
\begin{enumerate}[label=\textup{(\roman*)}]
\item \(\Sigma\) admits a weighted metric
      \((g,e^{-f}d\vol_g)\) with \(\Ric_f>0\);
\item \(\Sigma\) admits no Riemannian metric with
      \(\Ric\geq0\).
\end{enumerate}
Consequently, \(\Sigma\) provides a counterexample to the
Wei--Wylie question.
\end{theorem}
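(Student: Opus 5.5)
The theorem has two independent halves, and I would prove them separately.

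Part (i) is a special case of the general construction announced in the introduction: every homotopy sphere of dimension at least seven admits a weighted core metric with \(\Ric_f>0\). The route goes through twisted spheres. In dimension \(n\geq 7\) (indeed \(n\geq5\)), the \(h\)-cobordism theorem together with Smale's work shows that \(\Sigma\) is diffeomorphic to \(D^n\cup_\varphi D^n\) for some diffeomorphism \(\varphi\colon\Sph^{n-1}\to\Sph^{n-1}\).

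\begin{enumerate}[label=\textup{(\arabic*)}]
\item Build a weighted metric on \(\Sph^{n-1}\times[0,1]\) that has \(\Ric_f>0\) and whose two boundary components are isometric to round spheres. Both ends should be convex in the weighted sense: the second fundamental form minus \(f_\nu\) times the metric must be positive. Using a warped product \(dt^2+h(t)^2 g_{\Sph^{n-1}}\), the identification by \(\varphi\) at one end becomes harmless once \(\varphi\) is regarded as mapping a round sphere to itself. The catch is that \(\varphi\) is not an isometry, so one cannot simply glue.
\item To handle this, interpolate in the cylinder between \(g_{\mathrm{round}}\) and \(\varphi^*g_{\mathrm{round}}\) through a path of metrics on \(\Sph^{n-1}\) with positive Ricci curvature. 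Such a path is not available in general, so instead I would use the Hessian term. Take a long cylinder with warping function \(h\) very large and with \(f=\lambda t^2\), or a similar strongly convex function in \(t\). Then \(\Hess f\) dominates the negative curvature produced by the \(t\)-dependence of the metric path, while the tangential Ricci curvature is positive after rescaling. The key point is that \(\partial_t f\) contributes positively in the \(t\)-direction, and the tangential term \(-f'h'/h\) can be made small by scaling \(h\).
\item Cap both ends with the round-hemisphere core pieces, using the weighted Perelman gluing theorem of Reiser--Tripaldi. The boundary conditions are an isometry of the boundary metrics together with the sum of the weighted second fundamental forms being nonnegative.
\end{enumerate}

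The main obstacle is step (2). An interpolating cylinder between round metrics differing by a non-isotopic diffeomorphism has to carry \(\Ric_f>0\) while keeping its boundary weighted-convex at both ends. A large Hessian term in \(t\) tends to make one end concave, since \(f_\nu\) enters with a sign. I would try putting \(f\) convex with its minimum in the middle and a large warping factor, then check the cross terms \(\Ric(\partial_t,X)\) via a Cauchy--Schwarz absorption.

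Part (ii) is an argument by contradiction. Suppose \(\Sigma\) carries a metric with \(\Ric\geq0\).

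\begin{enumerate}[label=\textup{(\arabic*)},resume]
\item Since \(\Sigma\) is simply connected and compact, the Cheeger--Gromoll splitting theorem applied to the universal cover (which is \(\Sigma\) itself) shows \(\Sigma\) is isometric to \(N\times\R^k\) with \(N\) compact. Compactness forces \(k=0\).
\item If the metric were Ricci-flat, the holonomy would be irreducible, because \(\Sigma\) is simply connected and has no nontrivial de Rham factor, being a homotopy sphere with no \(H^2\) or \(H^4\) splitting. Ricci-flat metrics are unavailable here by Berger's list and the Cheeger--Gromoll theorem: compact simply connected Ricci-flat manifolds have \(\hat A\) and Betti number constraints. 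More directly, a Ricci-flat metric has \(\Scal\equiv0\), and by Hitchin's theorem a metric with \(\Scal\geq0\) on a closed spin manifold with \(\alpha\neq0\) must have a nonzero parallel spinor. Then holonomy is \(SU\), \(Sp\), \(G_2\) or \(\Spin(7)\), which cannot occur in dimensions \(8k+1\) and \(8k+2\) for irreducible simply connected \(\Sigma\) (\(SU(4k+1)\) would need \(H^2\neq0\)), or the metric is flat, which is impossible.
\item If the metric is not Ricci-flat, then Aubin/Ehrlich deformation gives a metric with \(\Ric>0\), in particular \(\Scal>0\). This contradicts Hitchin's obstruction \(\alpha(\Sigma)\neq0\).

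\end{enumerate}

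The final assertion follows because \(f\) is bounded on a compact manifold.
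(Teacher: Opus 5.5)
Your part (i) has a genuine gap, and the mechanism you propose in step (2) points the wrong way. Take any path \(h_s\) from \(g_\circ\) to \(\phi^*g_\circ\). A warped product \(dt^2+h(t)^2g_{\Sph^{n-1}}\) cannot join them unless \(\phi\) is an isometry. The intermediate metrics may have negative Ricci curvature, and no rescaling changes that sign. Moreover, \(f''\) enters only the \(\partial_t\partial_t\) entry. For \(G=dt^2+h(t)\), the only Hessian contribution to the tangential block is \(\tfrac12F'\dot h\), which is exactly the term you propose to make small.

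The paper makes this term the dominant positive one (Lemma~\ref{lem:zero-cylinder}):
\begin{itemize}
\item Choose \(R\) with \(D=R^2\phi^*g_\circ-g_\circ>0\). Then \(h_1=R^2\phi^*g_\circ\) is still a round sphere of radius \(R\), and the path \(h_s=g_\circ+sD\) is monotone.
\item Set \(h(t)=h_{\sigma(t/L)}\) with \(\sigma\) flat near the endpoints, and take \(F=Qt^2/2\).
\item On the middle region, \(\tfrac12F'\dot h=\tfrac12Qts'D\geq\tfrac12Qabd\,h\). This beats \(\Ric_{h_s}\geq-C_Rh\) and the \(O(L^{-2})\) terms coming from \(\dot h,\ddot h\).
\item Near the ends, \(\dot h=0\) and \(\Ric_{h_s}>0\).
\item The mixed block is \(O(L^{-1})\) and is absorbed by a Schur complement for large \(Q\).
\end{itemize}
The monotonicity is essential: \(F'\) and \(\dot h\) must have the same sign wherever \(\Ric_{h_s}\) fails to be positive.

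The second gap is the boundary, which you flag but do not resolve. If \(f\) is convex with its minimum in the interior, then \(\nu(f)\) is large and positive at both ends, so \(H^f=H-\nu(f)\ll0\) there. A round hemisphere with constant potential has \(\II=0\) and \(H^f=0\) along its equator. So the condition \(H^{f_1}+H^{f_2}\geq0\) of Theorem~\ref{thm:gluing} fails at both ends.

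The paper handles the two ends differently:
\begin{itemize}
\item It puts the critical point of \(F\) at \(t=0\). There \(\II=0\) and \(H^F=0\), so that end goes straight into Theorem~\ref{thm:core-completion}.
\item At \(t=L\), where \(H^F_{\mathrm{cyl}}=-QL\), it glues the cap of Lemma~\ref{lem:cap}. This is a disk with \(\Ric_u>0\), totally geodesic round boundary of radius \(R\), and potential with \emph{prescribed} outward derivative \(\lambda=-QL\). Then \(H^u_{\mathrm{cap}}=QL\) cancels exactly.
\end{itemize}
That cap is the missing ingredient. It uses the linear potential \(u=\lambda t\) on an annulus \(dt^2+(1-\tfrac a2t^2)^2g_\circ\), closed off by a tiny round cap whose boundary mean curvature, of order \(m/\eta\), dominates \(|\lambda|\).

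Part (ii) is essentially the paper's argument; just delete step (6). ``Not Ricci-flat'' does not give \(\Ric>0\) at a point, which Aubin/Ehrlich require; consider \(\Sph^2\times T^2\). That case is vacuous anyway: the parallel spinor has constant positive norm, forcing \(\Scal\equiv0\) and hence \(\Ric\equiv0\).
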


Since \(\Sigma\) is compact, the metric in
Theorem~\ref{thm:main}\textup{(i)} is complete and its potential is
bounded.  Thus these examples satisfy exactly the assumptions in the
question of Wei and Wylie.  The first dimensions in which Hitchin
spheres occur are
\(9,10,17,18,25,26,\ldots\).  Although many exotic spheres admit
metrics with positive Ricci curvature~\cite{Wraith}, the spheres used
here have nonzero \(\alpha\)-invariant and therefore admit no metric
with positive scalar curvature.  Part~\textup{(ii)} is stronger,
because it also excludes Ricci-flat metrics.

We briefly describe the proof.  By Smale's \(h\)-cobordism
theorem~\cite{Smale1962}, every homotopy sphere \(\Sigma^n\) with
\(n\geq7\) has a twisted-sphere presentation
\(\Sigma_\phi=D^n\cup_\phi D^n\).  Starting from this presentation,
we construct a weighted cylinder joining the two boundary metrics.
On the part of the cylinder where the ordinary Ricci curvature may
fail to be positive, the Hessian of the potential supplies the
required positive tangential term.  We then attach a weighted cap
whose potential satisfies the boundary condition in the weighted
gluing theorem.  The weighted core completion theorem fills the
remaining round boundary by a round hemisphere.  This produces a
weighted core metric with \(\Ric_f>0\) on \(\Sigma_\phi\).

For the topological part, a nonzero \(\alpha\)-invariant forces the
Dirac operator to have a nonzero harmonic spinor.  If
\(\Ric\geq0\), the Schr\"odinger--Lichnerowicz formula shows that this
spinor is parallel and that the metric is Ricci-flat.  The de~Rham
decomposition theorem and the classification of irreducible holonomy
groups admitting parallel spinors then contradict the rational
cohomology of a sphere.

The construction also has a finite-dimensional consequence. For a
closed manifold \(M\), let \(q_{\mathrm{BE}}(M)\) denote the infimum
of the values \(q>0\) for which \(M\) admits a weighted metric with
\(\Ric_f^q>0\). On a closed manifold, a weighted metric with
\(\Ric_f>0\) satisfies \(\Ric_f^q>0\) for every sufficiently large
finite \(q\). Hence \(q_{\mathrm{BE}}(\Sigma)<\infty\) for every
homotopy sphere \(\Sigma\) of dimension at least seven. For a Hitchin
sphere, the obstruction of Reiser--Tripaldi gives
\(4\leq q_{\mathrm{BE}}(\Sigma)<\infty\)
\cite[Proposition~A.2]{ReiserTripaldi}. We leave open whether
\(q_{\mathrm{BE}}(\Sigma)=4\) for every Hitchin sphere.

The paper is organized as follows.
Section~\ref{sec:preliminaries} collects the weighted gluing and core
results, the topological input, the elementary tensor estimates, and
the curvature formulas for metric cylinders.  The cylinder and cap
are constructed in Sections~\ref{sec:zero-cylinder} and
\ref{sec:cap}, and the geometric construction is completed in
Section~\ref{sec:twisted-core}. The topological obstruction,
the proof of the main theorem, and further examples are presented in
Section~\ref{sec:main-proof}.  The final section records the
finite-dimensional consequence and the remaining sharpness question.

\section{Preliminaries}
\label{sec:preliminaries}

We collect the background needed in the proof.  We first fix our
weighted boundary notation and recall the gluing and core results of
Reiser--Tripaldi.  We then state the topological facts and elementary
tensor estimates used later.  Finally, we record the curvature formulas
for a metric cylinder.  These formulas are standard, but we include a
calculation to fix our curvature and boundary sign conventions.

\subsection{Weighted boundary notation}

Let \(N\) be a boundary component of a Riemannian manifold.  All
boundary quantities are computed with respect to the outward unit
normal \(\nu\).  Our convention is
\(\II(X,Y)=g(\nabla_X\nu,Y)\), \(H=\tr\II\), and
\(H^f=H-\nu(f)\).

\subsection{Weighted gluing}

For \(q\in(0,\infty)\), recall that
\(\Ric_f^q=\Ric+\Hess f-q^{-1}df\otimes df\), while
\(\Ric_f^\infty=\Ric_f\).  We use the following gluing result of
Reiser--Tripaldi \cite[Theorem~A]{ReiserTripaldi}.  Their choices of
the outward normal, \(\II\), and \(H^f\) agree with ours.

\begin{theorem}\label{thm:gluing}
Fix \(q\in(0,\infty]\).  Let \((M_i,g_i,e^{-f_i})\), \(i=1,2\), have
\(\Ric_{f_i}^q>0\).  Suppose that boundary components of \(M_1\) and
\(M_2\) are identified by an isometry and that the boundary values of
the potentials agree under this identification.  After pulling back
the tensors from the second boundary, assume that
\(\II_1+\II_2\geq0\) and
\(H^{f_1}_1+H^{f_2}_2\geq0\).  Then the glued manifold admits a smooth
weighted metric with \(\Ric_f^q>0\).  The new metric and potential agree
with the original ones outside an arbitrarily small neighborhood of
the gluing hypersurface.
\end{theorem}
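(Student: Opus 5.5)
The plan is to reduce the gluing to a one-dimensional smoothing problem in a collar of the gluing hypersurface \(N\), and to read off the two boundary inequalities as the signs of the singular parts of the curvature of the glued, merely Lipschitz, data. In Fermi coordinates write \(g_1=dt^2+g_t\) for \(t\in(-\varepsilon,0]\) on \(M_1\), with \(t=0\) on \(N\) and \(\partial_t=\nu_1\). Write \(g_2=dt^2+g_t\) for \(t\in[0,\varepsilon)\) on \(M_2\), where \(t\) is the distance to \(N\), so that \(\partial_t=-\nu_2\). Since the boundary metrics agree, \(g_t\) is continuous at \(t=0\). With the convention \(\II(X,Y)=g(\nabla_X\nu,Y)\), we get \(\partial_t g_t|_{0^-}=2\II_1\) and \(\partial_t g_t|_{0^+}=-2\II_2\). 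Similarly \(f\) is continuous, with \(\partial_tf|_{0^-}=\nu_1(f_1)\) and \(\partial_tf|_{0^+}=-\nu_2(f_2)\).

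The first step is to compute the distributional curvature. We have \(\partial_t^2g_t=-2(\II_1+\II_2)\,\delta_0+(\text{bounded})\) and \(\partial_t^2 f=-(\nu_1f_1+\nu_2f_2)\,\delta_0+(\text{bounded})\). From the cylinder formula \(\Ric(\partial_t,\partial_t)=-\tfrac12\tr(g_t^{-1}\partial_t^2g_t)+\tfrac14|g_t^{-1}\partial_tg_t|^2\), the \(\partial_t\partial_t\) component of \(\Ric_f\) therefore has singular part \((H_1^{f_1}+H_2^{f_2})\,\delta_0\). The tangential components \(\Ric(X,Y)\) contain \(-\tfrac12\partial_t^2g_t(X,Y)\), whose singular part is \((\II_1+\II_2)(X,Y)\,\delta_0\). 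The term \(\Hess f(X,Y)=\tfrac12\partial_tf\,\partial_tg_t(X,Y)+\dots\) has no \(\delta\)-part. The mixed components \(\Ric(\partial_t,X)\) involve only first \(t\)-derivatives and are uniformly bounded, and for finite \(q\) the term \(q^{-1}df\otimes df\) is bounded as well. So the hypotheses say precisely that the singular parts are nonnegative.

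The second step, which I expect to be the main obstacle, is to make both singular parts strictly positive. Otherwise smoothing produces large positive diagonal terms only in some directions, and the bounded off-diagonal terms together with the \(O(1)\) errors cannot be absorbed. I would first deform \((g_1,f_1)\) in a thin collar of \(N\) while keeping the boundary metric and the boundary value of \(f_1\) fixed. For the metric, a change of the form \(g_1\mapsto dt^2+(1+\eta\,\chi(t))g_t\), with \(\chi\) supported near \(0\), \(\chi(0)=0\) and \(\chi'(0)>0\), adds a multiple of \(g|_N\) to \(\II_1\). For the potential, adding \(\eta\,\psi(t)\) with \(\psi(0)=0\) lets me adjust \(\nu_1(f_1)\) downward independently, which raises \(H^{f_1}_1\). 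Since \(\Ric^q_{f_1}>0\) is an open condition on a compact collar and both perturbations are \(C^2\)-small on it, the choice of \(\eta\) has to be balanced against the size of the collar. This is the delicate bookkeeping: \(\chi\) and \(\psi\) have second derivatives of size \(\eta/(\text{width})^2\) and can have either sign there. If this direct approach fails, I would follow Perelman's original trick and bend the boundary using a conformal factor that is a function of \(t\) only with controlled convexity.

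The third step is smoothing. With strict inequalities \(\II_1+\II_2\ge 2c\,g_N\) and \(H^{f_1}_1+H^{f_2}_2\ge 2c\), I replace \(\partial_tg_t\) and \(\partial_tf\) on \([-\delta,\delta]\) by monotone interpolations of their one-sided values, for example by convolving the jump with a smooth step, and then integrate in \(t\). This gives \(C^1\)-close data that are smooth, agree with the originals outside \([-\delta,\delta]\), and keep the boundary values. On \([-\delta,\delta]\) the second derivatives contribute a diagonal term at least \(c/\delta\) in the \(\partial_t\partial_t\) component and at least \((c/\delta)\,g_t\) tangentially. All other terms, including the mixed ones and \(q^{-1}df\otimes df\), stay bounded independently of \(\delta\). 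By Cauchy–Schwarz the full tensor is then positive definite once \(\delta\) is small. Outside \([-\delta,\delta]\) nothing changes. This yields the smooth weighted metric with \(\Ric^q_f>0\) that agrees with the original data away from an arbitrarily small neighborhood of \(N\).
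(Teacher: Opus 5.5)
The paper does not prove this statement. It imports it from Reiser--Tripaldi (Theorem~A) and only checks that their outward normal, $\II$ and $H^f$ conventions coincide with the ones fixed here.

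Your proposal instead reconstructs the Perelman-type argument behind that result: make the inequalities strict in a collar, then smooth the Lipschitz data in Fermi coordinates. Your jump bookkeeping is correct. The singular parts are $(H^{f_1}_1+H^{f_2}_2)\delta_0$ in the normal block, $(\II_1+\II_2)\delta_0$ in the tangential block, and nothing in the mixed block. This is exactly the sign check the citation relies on.

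Step~2 also works, and it is not delicate. Fix the collar width $w$ and $\chi$ (say $\chi(t)=t\beta(t)$ with a cutoff $\beta$), then let $\eta\to0$. The perturbation is $O(\eta)$ in $C^2$ on a fixed compact collar. Meanwhile $\II_1$ gains $\tfrac12\eta\chi'(0)g_N$ and $H^{f_1}_1$ gains its trace. These gains are small but strictly positive, which is all Step~3 needs. Uniform margins use compactness of $N$, which holds in the application.

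The gap is in Step~3. The claim that the second derivatives contribute at least $c/\delta$ on all of $[-\delta,\delta]$ is false. A smooth step $\theta_\delta$ replacing $\mathbf 1_{\{t>0\}}$ has $\theta_\delta'=0$ at $\pm\delta$, so the large diagonal term disappears near the ends of the interval. There the first-order data are convex combinations of the two one-sided values, not the data of either original metric, so your Cauchy--Schwarz argument gives nothing.

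You need a two-region argument. Take $\theta_\delta(t)=\Phi(t/\delta)$ with $\Phi$ strictly increasing on $(-1,1)$.
On $\{\epsilon_1\le\theta_\delta\le1-\epsilon_1\}$ one has $\theta_\delta'\ge b(\epsilon_1)/\delta$, and your argument applies.
On the complement, the $1$-jet and the part of the $2$-jet not involving $\theta_\delta'$ are within $O(\epsilon_1+\delta)$ of the one-sided data. The remaining term is $\theta_\delta'\,\mathrm{diag}\bigl(H^{f_1}_1+H^{f_2}_2+O(\delta),\ \II_1+\II_2+O(\delta)\bigr)$. It is positive semidefinite by strictness, and exactly block diagonal because the mixed components contain no second $t$-derivatives. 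So the original positivity margin survives once $\epsilon_1$, and then $\delta$, are small.

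Note that the $O(\delta)$ terms are multiplied by $\theta_\delta'\sim\delta^{-1}$. They must therefore be absorbed by strictness rather than treated as small errors.

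There is a second, smaller problem. Integrating the smoothed $\partial_tg_t$ from $-\delta$ does not return to $g_\delta$ at $t=\delta$. Writing $k^\pm$ for the one-sided values of $\partial_tg_t$, the mismatch $\int(\theta_\delta-\mathbf 1_{\{t>0\}})(k^+-k^-)\,dt$ is generally nonzero. A clean fix is to set $\tilde g_t=g^-_t+\varphi_\delta(t)\,(g^+_t-g^-_t)/t$ and $\tilde f=f^-+\varphi_\delta(t)\,(f^+-f^-)/t$. Here $g^\pm,f^\pm$ are smooth extensions of the two sides, and $\varphi_\delta=\int_{-\infty}^t\theta_\delta$ with $\Phi(s)+\Phi(-s)=1$. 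Then $\varphi_\delta=\max(t,0)$ outside $[-\delta,\delta]$. These data therefore agree with the glued data there automatically, and they produce exactly the $\theta_\delta'$ terms above.
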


The boundary inequalities are non-strict, so equality is allowed.  If
the restrictions of both potentials to the boundary are constant, one
may add a constant to one potential so that their boundary values
agree.  This changes neither \(df\) nor \(\Hess f\), and therefore does
not change \(\Ric_f^q\).

\subsection{Weighted cores}

The gluing theorem will first give a weighted metric on a punctured
manifold.  To pass from this punctured construction to a closed
manifold, we use the notion of a weighted core metric.

\begin{definition}
Let \(q\in(0,\infty]\).  A closed \(n\)-manifold \(M\) has a
\emph{weighted core metric with respect to \(q\)} if it admits a
weighted metric \((g,e^{-f})\) with \(\Ric_f^q>0\) and an isometric
embedding of the unit round hemisphere \(D^n\hookrightarrow M\) such
that \(f\) is constant on the hemisphere.  When \(q=\infty\), we simply
say that \(M\) has a \emph{weighted core metric}.
\end{definition}

The main advantage of this definition is that the core condition can
be checked on the complement of a disk.  We use the following
characterization of Reiser--Tripaldi.

\begin{theorem}\label{thm:core-completion}
Fix \(q\in(0,\infty]\).  A closed manifold \(M\) has a weighted core
metric with respect to \(q\) if and only if
\(M\setminus\mathring D^n\) admits a weighted metric with
\(\Ric_f^q>0\) such that
\begin{enumerate}[label=\textup{(\alph*)}]
\item the boundary metric is round and the restriction of \(f\) to the
      boundary is constant;
\item \(\II\geq0\) and \(H^f\geq0\) on the boundary.
\end{enumerate}
The same characterization holds if the inequalities in
\textup{(b)} are replaced by \(\II>0\) and \(H^f>0\).
\end{theorem}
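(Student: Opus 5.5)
The plan is to prove both directions through the non-strict version of (b), and then to prove separately that the strict and non-strict versions of (b) are equivalent.

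\emph{From a core metric to (a),(b).} Let \((g,e^{-f})\) be a weighted core metric on \(M\), and delete the interior of the embedded unit hemisphere. The new boundary is the equator, which is a unit round sphere, and \(f\) is constant along it. The equator is totally geodesic in the round hemisphere, and \(\II\) depends only on the metric near the boundary, so \(\II=0\). The potential \(f\) is constant on the closed hemisphere, so all of its derivatives vanish there. In particular the normal derivative \(\nu(f)\) vanishes on the equator, and therefore \(H^f=H-\nu(f)=0\). This proves (a) and the non-strict form of (b).

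\emph{From (a),(b) to a core metric.} Assume first that the strict inequalities \(\II>0\) and \(H^f>0\) hold, and let the boundary be round of radius \(\rho\).
\begin{enumerate}[label=\textup{(\arabic*)}]
\item For \(R\) slightly larger than \(\rho\), take a closed geodesic ball \(B\) of radius \(t_0\in(\pi/2,\pi)\) in the round sphere of radius \(R\), with \(R\sin t_0=\rho\). Its boundary is isometric to the given boundary.
\item Take the potential on \(B\) to be the constant equal to the boundary value of \(f\). Then \(\Ric_f^q=\Ric>0\) on \(B\).
\item With respect to the outward normal of \(B\), one has \(\II_B=(\cot t_0/R)\,g\) and \(H^f_B=(n-1)\cot t_0/R\). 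Both are negative but tend to \(0\) as \(t_0\to\pi/2\).
\item By compactness of the boundary, the strict inequalities give uniform positive lower bounds for \(\II\) and \(H^f\). So for \(t_0\) close to \(\pi/2\) the sum conditions of Theorem~\ref{thm:gluing} hold.
\item Gluing produces \(\Ric_f^q>0\) on \(M\), and the metric and potential are unchanged away from a small neighbourhood of the gluing hypersurface. Hence \(B\) still contains an untouched round hemisphere of radius \(R\) on which \(f\) is constant.
\item Rescale the metric by \(R^{-2}\). This preserves \(\Ric_f^q>0\) (the term \(\Hess f\) does not change under constant scaling, since Christoffel symbols are scale-invariant), and the hemisphere becomes a unit hemisphere.
\end{enumerate}

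\emph{From non-strict to strict.} This is the step I expect to be the main obstacle. Given a weighted metric satisfying the non-strict form of (b), I attach a thin weighted collar \([0,\varepsilon]\times\Sph^{n-1}\) with metric \(dt^2+h(t)^2ds^2_{n-1}\) and potential \(\varphi(t)\). The requirements are as follows.
\begin{itemize}
\item At \(t=0\): \(h(0)=\rho\) and \(h'(0)=0=\varphi'(0)\). The collar end is then totally geodesic and has \(H^\varphi=0\), so the gluing sums equal the original, non-negative, \(\II\) and \(H^f\).
\item On \((0,\varepsilon]\): \(h'>0\) and \(\varphi'<0\). At \(t=\varepsilon\) the new boundary is then round of radius \(h(\varepsilon)\), with \(\II=(h'/h)g>0\) and \(H^\varphi=(n-1)h'/h-\varphi'>0\).
\end{itemize}
For the curvature of the collar I use the standard warped-product formulas:
\begin{align*}
\Ric_\varphi(\partial_t,\partial_t)&=-(n-1)\frac{h''}{h}+\varphi'',\\
\Ric_\varphi\big|_{\text{tangential}}&=\Big(\frac{(n-2)(1-h'^2)}{h^2}-\frac{h''}{h}+\frac{h'\varphi'}{h}\Big)g.
\end{align*}
Now choose \(h''=\delta>0\) very small and \(\varphi'=-\delta t\), so that \(\varphi''=-\delta\) is also small. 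The tangential term is then dominated by \((n-2)/\rho^2>0\) when \(n\geq3\). The radial term is \(-(n-1)\delta/h-\delta\), which is negative, so this choice fails as stated.

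To fix the radial term, I instead take \(\varphi'=-\delta t+Ct^2\) with \(\varphi''\) of size \(C\gg\delta/\rho\) on \((0,\varepsilon]\), while keeping \(\varphi'<0\) on the short interval. The aim is to make \(\varphi''\) dominate \((n-1)h''/h\). For \(q<\infty\) the term \(q^{-1}\varphi'^2\) is \(O(\delta^2)\) and can be absorbed in the same way. The expected difficulty is balancing \(\varphi''>0\) against \(\varphi'<0\) on a short interval while keeping every term positive, and matching smoothly at \(t=0\). I would handle the behaviour at \(t=0\) by using Theorem~\ref{thm:gluing} itself there, since its boundary conditions are non-strict, rather than asking for smooth matching. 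Finally, the boundary potential remains constant throughout, so (a) continues to hold.
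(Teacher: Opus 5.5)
\textbf{Steps (1)--(6) are fine; the non-strict-to-strict step is not.} Your first direction (core \(\Rightarrow\) non-strict (b)) is correct. So is your strict case: glue a round ball of angular radius slightly above \(\pi/2\) with constant potential, then rescale. This is presumably the argument behind Reiser--Tripaldi's Lemma~4.3; the paper cites that lemma rather than proving the statement.

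The gap is your third step, and both ``non-strict \(\Rightarrow\) core'' and ``core \(\Rightarrow\) strict'' depend on it. No outward warped collar of your form \(dt^2+h^2ds^2_{n-1}\), with radial \(\varphi\), can work, whatever \(h\) and \(\varphi\) you choose. Let \(\mathcal H(t)=(n-1)h'/h-\varphi'\) be the weighted mean curvature of \(\{t\}\times\Sph^{n-1}\) with respect to \(+\partial_t\). Your own formulas give
\[
\mathcal H'=-\Ric_\varphi(\partial_t,\partial_t)-(n-1)\Bigl(\frac{h'}{h}\Bigr)^2 .
\]
Moreover, \(\Ric^q_\varphi>0\) for any \(q\in(0,\infty]\) implies \(\Ric_\varphi(\partial_t,\partial_t)>0\), so \(\mathcal H\) is strictly decreasing.

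\begin{itemize}
\item Your conditions at \(t=0\) give \(\mathcal H(0)=0\). Hence the new boundary has \(H^\varphi=\mathcal H(\varepsilon)<0\).
\item Equivalently, \(\varphi'>(n-1)h'/h>0\) wherever \(h'>0\), which contradicts your requirement \(\varphi'<0\) outright. This is not a balancing problem that a better choice could solve.
\item Your trial \(\varphi'=-\delta t+Ct^2\) illustrates this. The radial term at \(t=0\) is \(-(n-1)\delta/\rho-\delta<0\). On the whole interval where \(\varphi'<0\), one has \(\varphi''<\delta\), not \(\varphi''\) of size \(C\).
\item Using the slack in the gluing inequality does not help. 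At \(t=0\) it only permits \(\mathcal H(0)\le\min H^f\), so \(\mathcal H(\varepsilon)<\min H^f\). This is \(\le0\) whenever \(H^f\) vanishes somewhere, in particular for the data \(\II=0\), \(H^f=0\) produced by your first step.
\end{itemize}
This is the weighted Riccati equation at work: when \(\Ric_f>0\), weighted mean curvature decreases in the outward direction. Strict convexity therefore cannot be created by adding material outside the boundary.

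\textbf{The missing idea is to perturb inside \(M\setminus\mathring D^n\).} Keep the boundary metric and \(f|_\partial\) fixed. Write a normal collar as \(g=ds^2+g_s\), where \(s\) is the distance to the boundary, so \(\nu=-\partial_s\) and \(\II=-\frac12\partial_sg_s|_{s=0}\). Replace \(g_s\) by \(g_s+\psi(s)g_0\), where \(\psi(s)=-\varepsilon^2s\,\chi(s/\varepsilon)\) and \(\chi\) is a cutoff equal to \(1\) near \(0\) and to \(0\) on \([1,\infty)\).
\begin{itemize}
\item Then \(|\psi|\lesssim\varepsilon^3\), \(|\psi'|\lesssim\varepsilon^2\) and \(|\psi''|\lesssim\varepsilon\). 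So the metric moves by \(O(\varepsilon)\) in \(C^2\).
\item \(\Ric_f^q\) is uniformly positive on the compact collar, so it stays positive for small \(\varepsilon\).
\item Because \(\psi(0)=0\) and \(\psi'(0)=-\varepsilon^2\), the boundary metric, \(f|_\partial\) and \(\nu(f)\) are unchanged.
\item Meanwhile \(\II\) increases by \(\frac{\varepsilon^2}{2}g_0\) and \(H^f\) by \(\frac{(n-1)\varepsilon^2}{2}\), so both become strictly positive.
\end{itemize}
With this lemma in place of your collar, the rest of your argument goes through.

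\textbf{Smaller point: why the glued manifold in (5) is \(M\).} ``Round'' must be read in the parametrization of \(\partial D^n\). Then your gluing isometry differs from the standard identification by an element of \(O(n)\), which extends over the disk. An arbitrary isometry \(\psi\) would instead produce \(M\#\Sigma_\psi\), which is exactly the twisting exploited in Section~\ref{sec:twisted-core}.
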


This is the punctured form of
\cite[Lemma~4.3]{ReiserTripaldi}.  Although that lemma is stated in
terms of a weighted metric on \(M\) together with an embedded disk,
the proof of \((3)\Rightarrow(1)\) uses only the restriction of the
metric and potential to \(M\setminus\mathring D^n\).  Hence it applies
to the punctured metric constructed below.

We also record the connected-sum result that will be used later.

\begin{theorem}\label{thm:weighted-sum}
Fix \(q\in(0,\infty]\), and let \(M_0^n\) and \(M_1^n\) be closed
manifolds. If \(M_0\) admits a weighted metric with
\(\Ric_f^q>0\), and \(M_1\) admits a weighted core metric with
respect to \(q\), then \(M_0\#M_1\) admits a weighted metric with
\(\Ric_f^q>0\).
\end{theorem}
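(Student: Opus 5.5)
The plan is to reduce the statement to the two results already available: the gluing theorem (Theorem~\ref{thm:gluing}) and the core characterization (Theorem~\ref{thm:core-completion}). Both are applied directly to a punctured copy of \(M_0\) and a punctured copy of \(M_1\).

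\textbf{Step 1: puncture \(M_1\).} Let \((g_1,e^{-f_1})\) be a weighted core metric on \(M_1\) with respect to \(q\). Apply the ``only if'' direction of Theorem~\ref{thm:core-completion}, in its strict form. This gives a weighted metric on \(M_1\setminus\mathring D^n\) with \(\Ric_{f_1}^q>0\). Its boundary is a unit round sphere on which \(f_1\) is constant, and there \(\II_1>0\) and \(H^{f_1}_1>0\). For intuition: removing the core hemisphere leaves a boundary that is an equator of a round hemisphere, which is totally geodesic. The strict version comes from Reiser--Tripaldi's lemma.

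\textbf{Step 2: puncture \(M_0\).} Let \((g_0,e^{-f_0})\) have \(\Ric_{f_0}^q>0\) on \(M_0\). Fix a point \(p\in M_0\) and a small \(r>0\), and remove the geodesic ball \(B_r(p)\).
\begin{itemize}
\item The boundary, with outward normal pointing toward \(p\), is nearly a round sphere of radius \(r\). Its second fundamental form is approximately \(-r^{-1}g\) and its mean curvature approximately \(-(n-1)/r\).
\item These are large and negative, so they cannot be compensated by the bounded positive quantities from Step~1.
\end{itemize}
To fix this, rescale \((M_1\setminus\mathring D^n, g_1)\) by a factor \(\lambda^2\), so that its boundary becomes a round sphere of radius \(\lambda\). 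Then:
\begin{itemize}
\item \(\Ric\) is scale invariant and \(\Hess f\) is unchanged, but \(df\otimes df\) is also unchanged, so \(\Ric^q_{f_1}\) is unaffected as a \((0,2)\)-tensor.
\item \(\II\) scales by \(\lambda\) and \(H\) by \(\lambda^{-1}\). Both remain positive but become small.
\end{itemize}
This is the wrong direction, so scaling alone does not work.

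\textbf{Step 2, corrected.} Instead, \emph{first deform \(M_0\)} locally near \(p\). Replace a small ball by a weighted neck that ends in a round sphere whose boundary has \(\II\geq0\) and \(H^{f}\geq0\), with \(f\) constant there. That is, show that \(M_0\setminus\mathring D^n\) carries a weighted metric satisfying (a) and (b) of Theorem~\ref{thm:core-completion} for some radius. The construction I would use is:
\begin{enumerate}
\item Near \(p\), first make \(g_0\) nearly Euclidean on a small ball and make \(f_0\) nearly constant there. Keeping \(\Ric_f^q>0\) during this step requires care; Reiser--Tripaldi's local surgery techniques handle it.
\item Inside that ball, attach a doubly warped cylinder region \(dt^2+h(t)^2 ds^2_{n-1}\), with \(h\) bending from increasing to decreasing. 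Choose a potential \(f(t)\) whose Hessian term \(f''\) supplies the positive radial Ricci that is lost where \(h''>0\). This mirrors Perelman's and Burdick's argument for obtaining a core around a point of a positive-Ricci manifold.
\item Finish at a level \(t_0\) where \(h'(t_0)\leq 0\) relative to the outward normal, \(f\) is constant on the slice, and \(-f'(t_0)\) has the sign needed for \(H^f\geq0\).
\end{enumerate}

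\textbf{Step 3: glue.} After a global rescaling, match the radii of the boundary spheres and add a constant to \(f_1\) so the boundary potentials agree. Identify the boundaries by an isometry; orientation-reversing identifications give the connected sum. Then \(\II_0+\II_1\geq0\) and \(H^{f_0}_0+H^{f_1}_1\geq0\), so Theorem~\ref{thm:gluing} produces \(\Ric_f^q>0\) on \(M_0\#M_1\).

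\textbf{Main obstacle.} The hard step is Step~2: producing a round, weighted-convex boundary sphere in an arbitrary manifold with \(\Ric_f^q>0\). For finite \(q\), the term \(-q^{-1}f'^2\) limits how large \(f'\) may be. I would control this by taking the neck very thin, so that the intrinsic sphere curvature \((n-2)(1-h'^2)/h^2\) dominates the tangential directions, while \(f''\) handles the radial direction.
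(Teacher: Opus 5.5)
\textbf{Genuine gap: Step 2 (corrected) cannot be carried out.} The paper does not prove this statement itself; it quotes it as the two-summand case of Reiser--Tripaldi's Theorem~B. Your Step~1 is fine.

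The trouble is Step~2 (corrected), which fails for two reasons. First, by Theorem~\ref{thm:core-completion}, what you ask of \(M_0\setminus\mathring D^n\) is precisely a weighted core metric on \(M_0\): round boundary, constant \(f\), \(\II\geq0\), \(H^f\geq0\). Your argument would therefore prove that \emph{every} closed manifold with \(\Ric_f^q>0\) has a weighted core metric. That is much stronger than the deliberately asymmetric statement.

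Second, the local mechanism you describe is ruled out by monotonicity of the weighted mean curvature. Use your ansatz \(dt^2+h(t)^2g_\circ\), \(f=f(t)\), with \(t\) increasing away from the removed disk. Put \(\Phi=(n-1)h'/h-f'\), the weighted mean curvature of the slices with respect to \(+\partial_t\). Then
\[
\Phi'=-\bigl(\Ric+\Hess f\bigr)(\partial_t,\partial_t)-(n-1)\frac{(h')^2}{h^2}<0 .
\]
This holds for every \(q\), since \(\Ric_f^q>0\) implies \(\Ric_f>0\).
\begin{itemize}
\item At the new boundary \(t_0\) the outward normal is \(-\partial_t\), so \(H^f=-\Phi(t_0)\). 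Hence \(H^f\geq0\) forces \(\Phi<0\) for all \(t>t_0\).
\item Where you rejoin the nearly Euclidean region with nearly constant \(f\), however, \(\Phi\approx(n-1)/h>0\). This is a contradiction.
\item Taking \(f''\) large to repair the radial Ricci term only raises \(f'\) and makes this worse.
\end{itemize}
Without the warped ansatz, the same contradiction is the weighted Frankel--Kasue argument. The annulus between the small geodesic sphere (where \(H^f>0\)) and the new boundary (where \(H^f\geq0\)) would be a compact connected region with \(\Ric_f>0\) and two weighted mean convex boundary components. That is impossible, as one sees by comparing \(\Phi\) along a shortest geodesic joining the two components.

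\textbf{Where the work has to go.} If \(M_0\) is changed only near \(p\), its side of the interface stays weighted mean concave. For the complement of a small nearly Euclidean ball one has \(\II\approx-r^{-1}g\) and \(H^f\approx-(n-1)/r\). So the \(M_1\) side must supply \(\II\) and \(H^f\) large enough to compensate.
\begin{itemize}
\item The strict form of Step~1 gives \(H^f>0\) on the core side. Shrinking that piece by \(\lambda\) multiplies its principal curvatures and \(H^f\) by \(\lambda^{-1}\). Your remark that they ``become small'' is inverted.
\item What is scale-invariant is \(\II\) normalized by the boundary radius. That is why scaling alone cannot fix the \(\II\)-mismatch.
\item One then needs a weighted neck with \(\Ric_f^q>0\) joining the shrunken core boundary, whose \(H^f\) is now large, to the sphere of radius \(r\) in \(M_0\). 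Along the way its normalized second fundamental form must rise from about \(0\) to about \(1\). The Hessian of the potential has to compensate wherever the neck's radial Ricci curvature is negative.
\end{itemize}
Building such a neck on the \(M_1\) side, rather than a core on \(M_0\), is the kind of step the cited Theorem~B must supply. It is what your proposal is missing.
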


This is the case of two summands in
\cite[Theorem~B]{ReiserTripaldi}.

\subsection{Topological and spin-geometric facts}

We now turn from the geometric tools to the topological input.  The
first result places homotopy spheres within the scope of our
twisted-sphere construction.  The second provides the spheres needed
for the obstruction to nonnegative Ricci curvature.

\begin{theorem}[{\cite{Smale1962}}]
Every homotopy \(n\)-sphere with \(n\geq7\) is a twisted sphere.  More
precisely, there is a diffeomorphism
\(\phi:\Sph^{n-1}\to\Sph^{n-1}\) such that
\(\Sigma=D^n\cup_\phi D^n\).  For the standard sphere this is immediate,
and for an exotic sphere it follows from the \(h\)-cobordism theorem.
\end{theorem}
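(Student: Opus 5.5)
The plan is to deduce the statement from the \(h\)-cobordism theorem together with elementary disk-gluing facts. Let \(\Sigma^n\) be a homotopy sphere with \(n\geq7\). Remove the interiors of two disjoint smoothly embedded closed disks \(D_1,D_2\subset\Sigma\), and set \(W=\Sigma\setminus(\mathring D_1\cup\mathring D_2)\). This is a compact cobordism between two copies of \(\Sph^{n-1}\). We first check that \(W\) is an \(h\)-cobordism. Since \(n\geq3\), removing two disks does not change the fundamental group, so \(W\) is simply connected. Excision and the long exact sequence of the pair show that the inclusion of \(\partial_0W=\partial D_1\) is a homology isomorphism, because \(\Sigma\) has the homology of \(\Sph^n\). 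By Whitehead's theorem the inclusion is then a homotopy equivalence, and the same holds for the other boundary component \(\partial D_2\).

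Since \(\dim W=n\geq7\geq6\) and \(W\) is simply connected, Smale's \(h\)-cobordism theorem~\cite{Smale1962} gives a diffeomorphism \(W\cong \Sph^{n-1}\times[0,1]\) restricting to the identity on \(\partial_0W\). We then absorb the collar into the first disk: \(D_1\cup_{\partial D_1} W\) is diffeomorphic to a disk \(D^n\), because a disk with an exterior collar attached is again a disk. Hence \(\Sigma=D^n\cup_\psi D_2\) for the diffeomorphism \(\psi\colon\partial D^n\to\partial D_2\) induced by the product structure on the other end. Identifying \(\partial D_2\) with \(\Sph^{n-1}\) via a fixed chart, we obtain \(\phi\) with \(\Sigma\cong D^n\cup_\phi D^n\).

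Two bookkeeping points need care. The first is that gluing along the boundary spheres yields a smooth structure that is well defined up to diffeomorphism, which requires the usual collar-smoothing convention. The second is that the resulting smooth manifold agrees with \(\Sigma\); this holds because we never left \(\Sigma\), and the decomposition is inside \(\Sigma\) up to isotopy of the collars. For the standard sphere \(\Sph^n\) the argument is immediate with \(\phi=\mathrm{id}\), by splitting \(\Sph^n\) into two hemispheres.

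The only substantive input is the \(h\)-cobordism theorem. Its hypotheses need \(\dim W\geq6\), which holds because \(n\geq7\) (indeed \(n\geq6\) already suffices). Beyond verifying those hypotheses, the main care is in the smoothing step, where the identification of \(\Sigma\) with \(D^n\cup_\phi D^n\) must be a diffeomorphism rather than merely a homeomorphism. This is the standard step in which collars make the glued structure canonical.
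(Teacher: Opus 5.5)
Your proposal is correct and takes the same route as the paper, which only cites Smale's $h$-cobordism theorem. You supply the standard details the paper leaves out: removing two disks, checking that the complement is a simply connected $h$-cobordism between spheres, and absorbing the resulting product collar into one of the disks.
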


Thus, in dimensions \(n\geq7\), it is enough to treat twisted
spheres in order to construct weighted core metrics on homotopy
spheres. To obtain the counterexamples in the
main theorem, we choose homotopy spheres carrying a nontrivial
index-theoretic obstruction.

\begin{theorem}[{\cite[pp.~44--45]{Hitchin}}]
In every dimension \(8k+1\) and \(8k+2\), where \(k\geq1\), there
exists a homotopy sphere with nonzero \(\alpha\)-invariant.
\end{theorem}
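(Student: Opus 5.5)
This statement is Hitchin's existence theorem, and the paper cites it from \cite{Hitchin}. I would therefore reconstruct Hitchin's argument rather than build anything new. The plan has three parts: use Milnor's plumbing or the Adams \(e\)-invariant/\(J\)-homomorphism framework to find homotopy spheres that are spin boundaries only in a weak sense; identify \(\alpha\) on them with a \(KO\)-theoretic invariant; and show that this invariant is nonzero in \(KO_n\cong\Z/2\) for \(n=8k+1,8k+2\).

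\textbf{Step 1 (source of spheres).} Let \(\Theta_n\) be the group of homotopy \(n\)-spheres and \(bP_{n+1}\subset\Theta_n\) the subgroup bounding parallelizable manifolds. Kervaire--Milnor give an injection
\[
\Theta_n/bP_{n+1}\hookrightarrow \pi_n^s/\operatorname{im}J.
\]
Via the Pontryagin--Thom construction, a framed homotopy sphere determines a class in \(\pi_n^s\cong\Omega_n^{\mathrm{fr}}\). Its image in \(\Omega_n^{\Spin}\) is the class of the sphere with its unique spin structure, and this class is independent of the framing.

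\textbf{Step 2 (reduction to homotopy theory).} The \(\alpha\)-invariant of a homotopy sphere therefore equals the image of its framed class under the composite
\[
\pi_n^s\to\Omega_n^{\Spin}\xrightarrow{\alpha}KO_n .
\]
This composite is the unit map \(\pi_*^s\to KO_*\) of the sphere spectrum. Adams showed that in degrees \(8k+1\) and \(8k+2\) this map is surjective onto \(\Z/2\). The classes \(\mu_{8k+1}\) and \(\eta\mu_{8k+1}\) map to the generators, and these classes are detected by the \(d\)-invariant.

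\textbf{Step 3 (avoiding \(\operatorname{im}J\)).} This is the main obstacle. Every element of \(\pi_n^s\) is realized by a framed homotopy sphere; this follows from Kervaire--Milnor in these dimensions, and the Kervaire invariant problem only arises in dimensions \(\equiv 2 \pmod 4\) of the form \(2^j-2\). One still has to check that classes which are framed spheres but project trivially to \(\Theta_n\) do not kill the argument. Here the key point is that \(J\)-image classes come from reframing the standard sphere, and the standard sphere bounds a spin disc, so \(\alpha\) vanishes on \(\operatorname{im}J\).

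The potential circularity is that \(\mu_{8k+1}\) itself lies in Adams' image-of-\(J\) summand. The resolution is to use the fact that the unit map kills the classical \(J\)-image \(J(\pi_n(SO))\), while \(\mu\) lies outside \(J(\pi_n SO)\) and only in the larger Adams "im\(J\)" summand. So \(\mu_{8k+1}\) is represented by an exotic sphere \(\Sigma\) with \(\alpha(\Sigma)\neq0\), and similarly \(\eta\mu_{8k+1}\) in dimension \(8k+2\).

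If realizability is delicate, the fallback is Milnor-style plumbing in low dimensions, extended by taking products and connected sums with Bott-periodicity representatives. Concretely, \(\alpha(\Sigma\times B)=\alpha(\Sigma)\alpha(B)\) with \(\alpha(B)\) a Bott generator, followed by spin surgery of the product to a homotopy sphere inside the same spin bordism class.
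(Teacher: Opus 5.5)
Your main line is correct and is the standard argument behind the cited pages of Hitchin; the paper gives no proof of this statement beyond that citation. The argument has three parts: $\alpha$ of a homotopy sphere is the $KO$-Hurewicz image (Adams' $d$-invariant) of any of its framed classes; $d$ sends $\mu_{8k+1}$ and $\eta\mu_{8k+1}$ to the generator of $KO_n\cong\Z/2$; and these classes are realized by framed homotopy spheres. In dimension $8k+1$ realizability is Kervaire--Milnor; in dimension $8k+2$ it also needs Browder's vanishing of the Kervaire invariant, which you use only implicitly and should cite.

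The $\operatorname{im}J$ worry in Step~3 is moot, since Step~2 gives $\alpha\neq0$ directly for any realizing sphere. You should delete the closing fallback: a homotopy sphere's spin class must lie in the image of $\pi_n^s\to\Omega_n^{\Spin}$, nothing guarantees this for $\Sigma\times B$ (since $B$ is not stably parallelizable), so spin surgery gives no route from it to a homotopy sphere.
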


The first theorem reduces the geometric part of the argument to the
twisted-sphere construction.  The second supplies the Hitchin spheres
for which the Ricci curvature obstruction will be proved in
Section~\ref{sec:main-proof}.

\subsection{Elementary tensor estimates}
\label{sec:elementary}

We record two elementary facts used in the geometric construction.  If
\(a\) and \(b\) are symmetric \(2\)-tensors, the notation \(a\geq b\)
means that \(a(v,v)\geq b(v,v)\) for every tangent vector \(v\).  In
particular, \(a>0\) means that \(a\) is positive definite.  If \(h\) is
a Riemannian metric, we write
\(|a|_h^2=h^{ik}h^{j\ell}a_{ij}a_{k\ell}\).  The estimate
\(|a|_h\leq C\) implies \(-Ch\leq a\leq Ch\).  Indeed, in an
\(h\)-orthonormal basis, the absolute value of every eigenvalue of
\(a\) is bounded by \(|a|_h\).

\begin{lemma}
\label{lem:uniform-path}
Let \(X\) be closed, let \(h_s\), \(s\in[0,1]\), be a smooth family of
Riemannian metrics, and let \(A_s\) be a smooth family of symmetric
\(2\)-tensors.
\begin{enumerate}[label=\textup{(\roman*)}]
\item There is \(C\geq1\) such that
      \(C^{-1}h_0\leq h_s\leq Ch_0\) for every \(s\in[0,1]\).
\item There is \(C_A<\infty\) such that
      \(-C_Ah_s\leq A_s\leq C_Ah_s\) for every \(s\in[0,1]\).
\item If \(A_s>0\) for every \(s\), then there is \(d>0\) such that
      \(A_s\geq dh_s\) for every \(s\in[0,1]\).
\end{enumerate}
\end{lemma}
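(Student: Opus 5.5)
The whole lemma is a compactness argument on the unit sphere bundle. I will take \(S_0=\{v\in TX: h_0(v,v)=1\}\) as the base and consider \(K=[0,1]\times S_0\). Since \(X\) is closed, \(S_0\) is compact, and so \(K\) is compact. Smoothness of the families in \(s\) makes every function of \((s,v)\) built from \(h_s\) and \(A_s\) continuous on \(K\). Each part then follows from the fact that a continuous function on \(K\) attains its extrema. Because every inequality is homogeneous of degree two in \(v\), it is enough to prove it for \(v\in S_0\).

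For (i), I consider the continuous function \(F(s,v)=h_s(v,v)\) on \(K\). It is strictly positive because each \(h_s\) is positive definite. So it has a positive minimum \(m\) and a finite maximum \(M\). Taking \(C=\max\{M,m^{-1},1\}\) gives \(C^{-1}h_0(v,v)\le h_s(v,v)\le Ch_0(v,v)\) on \(S_0\), and by homogeneity for every \(v\).

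For (ii), the function \(|A_s(v,v)|\) is continuous on \(K\), hence bounded by some \(B\). This gives \(|A_s(v,v)|\le B\,h_0(v,v)\le BC\,h_s(v,v)\), using (i) in the form \(h_0\le Ch_s\). So \(C_A=BC\) works. Alternatively, the pointwise norm \(|A_s|_{h_s}\) is continuous on the compact set \([0,1]\times X\), and the remark preceding the lemma converts a bound on it into \(-C_Ah_s\le A_s\le C_Ah_s\).

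For (iii), I consider the ratio \(R(s,v)=A_s(v,v)/h_s(v,v)\) on \(K\). It is continuous because the denominator never vanishes, and it is strictly positive by hypothesis. It therefore attains a minimum \(d>0\), and \(A_s\ge dh_s\) follows by homogeneity.

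The only point needing care is joint continuity in \((s,x,v)\) together with compactness of \(K\). These hold because the families are smooth in \(s\) and \(X\) is closed. No step is a real obstacle.
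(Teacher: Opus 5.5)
Your proof is correct and is essentially the paper's argument. The paper uses compactness of \(X\times[0,1]\) to bound the eigenvalues of \(h_s\) relative to \(h_0\) and of \(A_s\) relative to \(h_s\), and your extrema of \(h_s(v,v)\), \(|A_s(v,v)|\) and \(A_s(v,v)/h_s(v,v)\) over the compact set \([0,1]\times S_0\) are precisely those extremal eigenvalues, written as Rayleigh quotients; this phrasing has the small advantage that continuity is immediate, whereas the paper asserts continuity of the eigenvalues without comment.
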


\begin{proof}
At each \((x,s)\in X\times[0,1]\), consider the eigenvalues of \(h_s\)
with respect to \(h_0\), and the eigenvalues of \(A_s\) with respect to
\(h_s\).  These eigenvalues depend continuously on \((x,s)\).
Compactness of \(X\times[0,1]\) gives uniform upper and positive lower
bounds for the eigenvalues of \(h_s\), as well as a uniform bound for
the absolute values of the eigenvalues of \(A_s\).  If \(A_s>0\), its
smallest eigenvalue has a positive minimum.  These bounds prove the
three statements.
\end{proof}

\begin{lemma}\label{lem:schur}
Let \(V=\R\oplus E\), and write a symmetric bilinear form on \(V\) as
\(\mathcal A=\left(\begin{smallmatrix}A&B\\B^*&T\end{smallmatrix}\right)\),
where \(A\in\R\), \(B:E\to\R\), and \(T\) is a symmetric bilinear form
on \(E\).  If \(T>0\), then \(\mathcal A>0\) if and only if
\(A-BT^{-1}B^*>0\).  In particular, if \(h\) is an inner product on
\(E\), \(T\geq ch\), \(|B(v)|\leq C|v|_h\), and
\(A>C^2/c\), then \(\mathcal A>0\).
\end{lemma}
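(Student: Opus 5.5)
The plan is to reduce both statements to the positivity of the scalar \(A\) on the Schur complement, using an explicit block decomposition. Since \(T>0\) on \(E\), \(T\) is invertible, and I will use the linear change of variables on \(V=\R\oplus E\) that sends \((t,v)\) to \((t,\,v+tT^{-1}B^*)\). Here \(T^{-1}B^*\in E\) denotes the vector \(w\) characterized by \(T(w,u)=B(u)\) for all \(u\in E\).

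First I expand the quadratic form: \(\mathcal A((t,v),(t,v))=At^2+2tB(v)+T(v,v)\). Completing the square in \(v\) with \(w=T^{-1}B^*\) gives
\[
\mathcal A((t,v),(t,v))=\bigl(A-BT^{-1}B^*\bigr)t^2+T(v+tw,\,v+tw),
\]
where \(BT^{-1}B^*=B(w)=T(w,w)\). I will verify this by direct expansion, using \(T(v,w)=B(v)\).

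With this identity both directions of the equivalence follow.
\begin{itemize}
\item If \(A-BT^{-1}B^*>0\) and \((t,v)\neq0\), then either \(t\neq0\), in which case the first term is positive and the second is nonnegative, or \(t=0\) and \(v\neq0\), in which case the second term equals \(T(v,v)>0\).
\item Conversely, if \(\mathcal A>0\), I evaluate at \((1,-w)\). The second term vanishes there, so \(A-BT^{-1}B^*=\mathcal A((1,-w),(1,-w))>0\).
\end{itemize}

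For the quantitative statement, I bound \(BT^{-1}B^*=T(w,w)=B(w)\). From \(T\geq ch\) I get \(c|w|_h^2\leq T(w,w)=B(w)\leq C|w|_h\), so \(|w|_h\leq C/c\). Hence \(BT^{-1}B^*\leq C|w|_h\leq C^2/c<A\), and the first part gives \(\mathcal A>0\).

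An alternative route is to apply \(2|tB(v)|\leq 2|t|C|v|_h\leq \epsilon t^2+\epsilon^{-1}C^2|v|_h^2\) with \(\epsilon=C^2/c\). This gives \(\mathcal A\geq (A-C^2/c)t^2\), which still requires care when \(t=0\), so the completed-square route is cleaner.

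The argument has no real obstacle. The only point needing care is identifying \(BT^{-1}B^*\) correctly as \(T(w,w)\) through the dual vector \(w\), and handling the case \(t=0\) in the forward direction.
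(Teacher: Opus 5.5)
Your proposal is correct and follows the paper's argument: the paper completes the square with \(b=T^{-1}B^*(1)\) to get \(\mathcal A((z,v),(z,v))=T(v+zb,v+zb)+z^2(A-BT^{-1}B^*)\), exactly as you do. The only difference is in the quantitative part, which is cosmetic. The paper cites \(T^{-1}\leq c^{-1}h^{-1}\) directly, whereas you derive \(|w|_h\leq C/c\) from \(c|w|_h^2\leq T(w,w)=B(w)\leq C|w|_h\); this is a self-contained justification of the same bound \(BT^{-1}B^*\leq C^2/c\).
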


\begin{proof}
Let \(b=T^{-1}B^*(1)\).  Then \(B(v)=T(b,v)\), and completing the square
gives
\[
 \mathcal A((z,v),(z,v))
 =T(v+zb,v+zb)+z^2\bigl(A-BT^{-1}B^*\bigr).
\]
This proves the first statement.  If \(T\geq ch\), then
\(T^{-1}\leq c^{-1}h^{-1}\), and hence
\(BT^{-1}B^*\leq C^2/c\).  The last statement follows.
\end{proof}

\subsection{Curvature formulas for metric cylinders}
\label{sec:cylinder-formulas}

Let \(X^m\) be closed, let \(h(t)\) be a smooth family of metrics on
\(X\), and consider the metric \(G=dt^2+h(t)\) on a product of an
interval with \(X\).  A dot denotes differentiation with respect to
\(t\), and \(F=F(t)\).  We use the curvature convention
\(R(U,V)W=\nabla_U\nabla_VW-\nabla_V\nabla_UW-\nabla_{[U,V]}W\).

The corresponding Ricci and Hessian calculations also appear in
\cite[Lemmas~2.4 and~2.5]{ReiserTripaldi}.  We include the calculation
because it fixes the signs used below and makes the later estimates
easier to follow.

\begin{lemma}\label{lem:blocks}
Fix an \(h(t)\)-orthonormal frame \(e_1,\ldots,e_m\).  The normal,
mixed, and tangential blocks of \(\Ric_G+\Hess_G F\) are
\begin{align}
A&=(\Ric_G+\Hess_G F)(\partial_t,\partial_t)
   =-\frac12\tr_h\ddot h+\frac14|\dot h|_h^2+F'',
   \label{eq:block-A}\\
B(v)&=(\Ric_G+\Hess_G F)(\partial_t,v)
   =-\frac12v(\tr_h\dot h)
     +\frac12\sum_{i=1}^m(\nabla^h_{e_i}\dot h)(v,e_i),
   \label{eq:block-B}\\
T(v,w)&=(\Ric_G+\Hess_G F)(v,w) \notag\\
 &=\Ric_h(v,w)-\frac12\ddot h(v,w)
   +\frac12\sum_{i=1}^m\dot h(v,e_i)\dot h(w,e_i)
   -\frac14(\tr_h\dot h)\dot h(v,w)
   +\frac12F'\dot h(v,w).
   \label{eq:block-T}
\end{align}
Moreover, the second fundamental form of the slice
\(\{t\}\times X\), computed with respect to \(+\partial_t\), is
\(\II=\dot h/2\).

If \(q\in(0,\infty)\), the blocks of
\(\Ric_F^q=\Ric_G+\Hess_G F-q^{-1}dF\otimes dF\) are
\(A_q=A-(F')^2/q\), \(B_q=B\), and \(T_q=T\).
\end{lemma}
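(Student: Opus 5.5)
The plan is to compute the Levi-Civita connection of \(G=dt^2+h(t)\) in a frame adapted to the product structure, and then read off each block of \(\Ric_G\) from the Gauss, Codazzi and Riccati equations for the slices \(X_t=\{t\}\times X\). The Hessian of \(F=F(t)\) is computed separately and added at the end.

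\emph{Connection and second fundamental form.} Work with a local coordinate frame \(\partial_i\) on \(X\), extended to be independent of \(t\). Then \([\partial_t,\partial_i]=0\) and \(\nabla_{\partial_t}\partial_t=0\), since the \(t\)-lines are unit-speed geodesics. Koszul's formula gives \(G(\nabla_{\partial_i}\partial_t,\partial_j)=\tfrac12\dot h_{ij}\). Hence the shape operator is \(S=\nabla\partial_t=\tfrac12 h^{-1}\dot h\) and \(\II=\dot h/2\) with respect to \(+\partial_t\), and the mean curvature is \(H=\tfrac12\tr_h\dot h\). For the Hessian, \(\Hess_G F(\partial_t,\partial_t)=F''\) and \(\Hess_G F(\partial_t,v)=0\), because \(v(F)=0\) and \(\nabla_{\partial_t}v\) is tangent. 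Also \(\Hess_GF(v,w)=-(\nabla_vw)(F)=F'\,\II(v,w)=\tfrac12F'\dot h(v,w)\), where the sign comes from \(G(\nabla_vw,\partial_t)=-G(w,\nabla_v\partial_t)\). This accounts for the \(F''\) term in \(A\), the absence of an \(F\)-term in \(B\), and the \(\tfrac12F'\dot h\) term in \(T\).

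\emph{The three Ricci blocks.} For the normal block I will use the Riccati equation \(\nabla_{\partial_t}S+S^2=-R(\cdot,\partial_t)\partial_t\). Tracing it gives \(\Ric_G(\partial_t,\partial_t)=-\partial_t(\tr S)-\tr S^2\). With \(\tr S=\tfrac12\tr_h\dot h\) and \(\partial_t(\tr_h\dot h)=\tr_h\ddot h-|\dot h|_h^2\), this becomes \(-\tfrac12\tr_h\ddot h+\tfrac12|\dot h|^2-\tfrac14|\dot h|^2\), which is \eqref{eq:block-A}.

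For the mixed block I will use the contracted Codazzi equation \(\Ric_G(\partial_t,v)=\pm\bigl(\operatorname{div}_h\II(v)-v(H)\bigr)\). The sign must be fixed by checking it against the curvature convention and the outward normal; I expect this to be the fiddliest point. With the paper's convention, the result should be \(\tfrac12\sum_i(\nabla^h_{e_i}\dot h)(v,e_i)-\tfrac12v(\tr_h\dot h)\), as in \eqref{eq:block-B}.

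For the tangential block I will combine Gauss and Riccati. Gauss gives \(\sum_i R_G(e_i,v,w,e_i)=\Ric_h(v,w)-H\,\II(v,w)+\II^2(v,w)\). The \(\partial_t\)-term \(R_G(\partial_t,v,w,\partial_t)\) comes from the Riccati equation: the covariant derivative of \(S\) along \(\partial_t\), lowered by \(h\), equals \(\tfrac12\ddot h-\tfrac14\dot h h^{-1}\dot h\). Adding the two pieces should give \(\Ric_h-\tfrac12\ddot h+\tfrac12\dot h\circ\dot h-\tfrac14(\tr_h\dot h)\dot h\), where \(\dot h\circ\dot h(v,w)=\sum_i\dot h(v,e_i)\dot h(w,e_i)\). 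The coefficient \(\tfrac12\) on the quadratic term arises as \(\tfrac14+\tfrac14\), one quarter from Gauss and one from Riccati. This bookkeeping is the main place where an error could creep in.

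As a check I will test the formulas on the flat cone \(h=t^2g_{\Sph^{m}}\) and the round sphere \(h=\sin^2t\,g_{\Sph^m}\). For these, \(A=m\), \(B=0\) and \(T=(m-1)\sin^{-2}t\cdot h\cdot\sin^2 t\) should come out right. Finally, the \(q\)-version is immediate: \(dF\otimes dF=(F')^2\,dt\otimes dt\) only changes the normal block, so \(A_q=A-(F')^2/q\), \(B_q=B\) and \(T_q=T\).
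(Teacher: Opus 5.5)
Your plan is correct and is essentially the paper's proof. The paper works from the Christoffel symbols in \(h(t)\)-normal coordinates and obtains the same three identities that your traced Riccati, contracted Codazzi, and Gauss-plus-Riccati equations give: \(\Ric_G(\partial_t,\partial_t)=-\partial_t\mathcal H-|K|_h^2\), \(\Ric_G(\partial_t,\partial_i)=\nabla^jK_{ij}-\nabla_i\mathcal H\), and \(\Ric_G(\partial_i,\partial_j)=(\Ric_h)_{ij}-\dot K_{ij}-\mathcal HK_{ij}+2K_{ik}K_j{}^k\), where \(K=\dot h/2\) and \(\mathcal H=\tr_hK\). It treats \(\II\), \(\Hess_GF\) and the \(q\)-term exactly as you do. Three details in your write-up are wrong or unfinished as stated.

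\emph{The derivative of \(S\).} Differentiating \(hS=\tfrac12\dot h\) gives \(h(\nabla_{\partial_t}S)=\tfrac12\ddot h-\tfrac12\,\dot h\circ\dot h\), not \(\tfrac12\ddot h-\tfrac14\,\dot h\circ\dot h\). Its trace, \(\tfrac12\tr_h\ddot h-\tfrac12|\dot h|_h^2\), is what your normal-block computation already uses. The value you quote is that of \(\nabla_{\partial_t}S+S^2=-R(\cdot,\partial_t)\partial_t\). Hence \(G(R(\partial_t,v)w,\partial_t)=-\tfrac12\ddot h(v,w)+\tfrac14(\dot h\circ\dot h)(v,w)\), and your \(\tfrac14+\tfrac14\) count is right. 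Taken literally, however, your sentence plus the \(S^2\) term from Riccati would give a quadratic coefficient of \(\tfrac14\).

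\emph{The Codazzi sign.} The sign is \(+\), and outward normals play no role. Write \(\Ric_G(v,\partial_t)=\sum_iG(R(e_i,v)\partial_t,e_i)\). The tangential part of \(R(e_i,v)\partial_t\) is \((\nabla^h_{e_i}S)v-(\nabla^h_vS)e_i\). Tracing gives \((\operatorname{div}_h\II)(v)-v(H)\), and both sides change sign if \(\partial_t\) is replaced by \(-\partial_t\). Your test metrics all have \(B=0\), so they cannot detect this sign.

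\emph{The test values.} For \(h=\sin^2t\,g_{\Sph^m}\) (the round \(\Sph^{m+1}\)) the formulas give \(A=m\), \(B=0\) and \(T=m\,h\), not \((m-1)h\). The \(\dot h,\ddot h\) terms contribute \((1-(m-1)\cot^2t)\,h\) on top of \(\Ric_h=(m-1)\csc^2t\,h\). For the flat cone \(h=t^2g_{\Sph^m}\), all three blocks vanish, so \(A=0\) there, not \(m\).
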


\begin{proof}
Choose local coordinates \(x^1,\ldots,x^m\) on \(X\), and put
\(x^0=t\).  The Christoffel symbols involving the \(t\)-direction are
\[
 \Gamma^0_{ij}=-\frac12\dot h_{ij},\qquad
 \Gamma^k_{0i}=\Gamma^k_{i0}
       =\frac12h^{k\ell}\dot h_{\ell i},\qquad
 \Gamma^0_{00}=\Gamma^k_{00}=\Gamma^0_{0i}=0.
\]
The symbols \(\Gamma^k_{ij}\) are those of \(h(t)\).  Set
\(K_{ij}=\dot h_{ij}/2\) and \(\mathcal H=\tr_hK\).  At a fixed point,
choose the \(x^i\)-coordinates to be normal for \(h(t)\).  Substitution
of the Christoffel symbols into the coordinate formula for the Ricci
tensor gives
\begin{align*}
\Ric_G(\partial_t,\partial_t)
   &=-\partial_t\mathcal H-|K|_h^2,\\
\Ric_G(\partial_t,\partial_i)
   &=\nabla^jK_{ij}-\nabla_i\mathcal H,\\
\Ric_G(\partial_i,\partial_j)
   &=\Ric_h(\partial_i,\partial_j)-\dot K_{ij}
     -\mathcal H K_{ij}+2K_{ik}K_j{}^k.
\end{align*}

Since
\(\partial_t h^{ij}=-h^{ik}\dot h_{k\ell}h^{\ell j}\), we have
\(\partial_t\mathcal H=\frac12\tr_h\ddot h
-\frac12|\dot h|_h^2\), while
\(|K|_h^2=\frac14|\dot h|_h^2\).  This gives the Ricci part of
\eqref{eq:block-A}.  Substituting \(K=\dot h/2\) and
\(\mathcal H=\tr_h\dot h/2\) into the other two identities gives the
Ricci parts of \eqref{eq:block-B} and \eqref{eq:block-T}.

Because \(F\) depends only on \(t\), its Hessian satisfies
\(\Hess_G F(\partial_t,\partial_t)=F''\),
\(\Hess_G F(\partial_t,v)=0\), and
\(\Hess_G F(v,w)=F'\dot h(v,w)/2\).  Adding these terms proves
\eqref{eq:block-A}--\eqref{eq:block-T}.

Finally,
\(G(\nabla_v\partial_t,w)=\dot h(v,w)/2\), which proves the formula
for \(\II\).  Since \(dF=F'\,dt\), the term
\(q^{-1}dF\otimes dF\) changes only the normal block, giving the stated
formulas for \(\Ric_F^q\).
\end{proof}

\begin{remark}
The last term in \eqref{eq:block-T} is the main positive term used in
the construction.  If \(F'>0\) and \(\dot h>0\), then
\(F'\dot h/2\) is positive definite and can dominate the remaining
tangential terms once uniform bounds are available.
\end{remark}

\section{The cylinder construction}
\label{sec:zero-cylinder}

The next lemma is the central geometric calculation.

\begin{lemma}\label{lem:zero-cylinder}
Let \(X^m\) be closed, and let \(h_0,h_1\) be metrics with positive
Ricci curvature and assume that \(D:=h_1-h_0>0\) as a symmetric
\(2\)-tensor.  Then there are \(L,Q>0\), a metric
\(G=dt^2+h(t)\) on \([0,L]\times X\), and a potential
\(F(t)=Qt^2/2\) such that
\begin{enumerate}[label=\textup{(\roman*)}]
\item \(\Ric_G+\Hess_G F>0\);
\item \(h(0)=h_0\) and \(h(L)=h_1\);
\item both boundary components are totally geodesic;
\item \(F'(0)=0\) and \(F'(L)=QL\).
\end{enumerate}
\end{lemma}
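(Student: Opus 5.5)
We take the ansatz \(h(t)=h_0+\eta(t/L)\,D\), where \(\eta:[0,1]\to[0,1]\) is a fixed smooth function with \(\eta(0)=0\), \(\eta(1)=1\), \(\eta'(0)=\eta'(1)=0\), and \(\eta'>0\) on \((0,1)\). For instance, \(\eta(s)=3s^2-2s^3\). Since \(D>0\), every \(h(t)\) is a convex combination of metrics, hence a metric. Then \(\dot h=L^{-1}\eta'D\geq0\), and \(\dot h\) vanishes at \(t=0,L\). Lemma~\ref{lem:blocks} gives \(\II=\dot h/2=0\) on both boundary components, so they are totally geodesic. Properties (ii) and (iv) are immediate from \(F=Qt^2/2\). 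The whole task is (i), and we check it through the blocks \(A,B,T\) of Lemma~\ref{lem:blocks} and the Schur criterion Lemma~\ref{lem:schur}.

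\emph{Uniform bounds.} Put \(s=t/L\), so that \(\dot h=L^{-1}\eta'(s)D\) and \(\ddot h=L^{-2}\eta''(s)D\). The family \(h(t)\) is \(\{h_0+\eta D\}\) for \(\eta\in[0,1]\), which is a fixed compact family. Lemma~\ref{lem:uniform-path} therefore gives, with constants independent of \(L\) and \(Q\):
\begin{itemize}[label=\(\cdot\)]
\item \(D\) is comparable to \(h(t)\): \(dh\leq D\leq C h\);
\item \(|\nabla^h D|_h\leq C\) and \(|\Ric_h|\leq C\).
\end{itemize}
Consequently:
\begin{itemize}[label=\(\cdot\)]
\item \(A=F''+O(L^{-2})=Q+O(L^{-2})\);
\item \(|B|=O(L^{-1})\);
\item \(T=\Ric_h-\tfrac12L^{-2}\eta''D+O(L^{-2})+\tfrac12 QsL\,L^{-1}\eta'(s)D\), where \(F'\dot h/2=\tfrac12 Qs\,\eta'(s)D\).
\end{itemize}

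\emph{The main obstacle: positivity of \(T\) near the ends.} The term \(F'\dot h/2=\tfrac12 Qs\eta'(s)D\) degenerates at both ends, since \(\eta'\to0\), and also at \(s=0\), where \(F'=0\). So near the ends positivity must come from \(\Ric_h\) itself. We handle this as follows.

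By Lemma~\ref{lem:uniform-path}(iii) applied to \(\Ric_{h_0}>0\) and \(\Ric_{h_1}>0\), together with continuity of \(\eta\mapsto\Ric_{h_0+\eta D}\), there is \(\delta>0\) such that \(\Ric_{h}\geq 2d\,h\) whenever \(\eta\in[0,\delta]\cup[1-\delta,1]\). Choose \(s_0\in(0,1/2)\) with \(\eta(s)\in[0,\delta]\) for \(s\leq s_0\) and \(\eta(s)\in[1-\delta,1]\) for \(s\geq 1-s_0\).

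\begin{itemize}[label=\(\cdot\)]
\item \emph{End regions} \(s\leq s_0\) or \(s\geq 1-s_0\). Here \(\Ric_h\geq 2dh\), the extra term \(\tfrac12 Qs\eta'D\) is \(\geq0\), and the errors are \(O(L^{-2})\). Hence \(T\geq dh\) once \(L\) is large.
\item \emph{Middle region} \(s\in[s_0,1-s_0]\). Here \(s\eta'(s)\geq c_0>0\), so \(T\geq(\tfrac12 Qc_0d-C)h-O(L^{-2})h\). Choosing \(Q\) large, independently of \(L\), makes \(T\geq h\).
\end{itemize}

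The order of choices is therefore: first \(Q\), then \(L\). Finally, Lemma~\ref{lem:schur} with \(c=\min(d,1)\) and \(C=O(L^{-1})\) requires \(A=Q-O(L^{-2})>O(L^{-2})\). This holds for \(L\) large, which gives (i).
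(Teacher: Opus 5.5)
Your proposal is correct and follows the paper's argument. Both interpolate linearly, $h=h_0+(\text{reparametrization})\,D$, on a long interval with $F=Qt^2/2$, take positivity near the ends from $\Ric_{h_0},\Ric_{h_1}>0$, use an $L$-independent lower bound on the coefficient of $\tfrac12F'\dot h$ in the middle, and finish with the Schur criterion. The differences are cosmetic: your cubic $\eta$ makes $\dot h$ vanish only at $t=0,L$, which is enough for (iii), where the paper's flat reparametrization is constant near the ends; and you fix $Q$ before $L$ rather than after, which works because the end-region estimate does not involve $Q$ and the middle-region estimate is uniform once $L\geq1$.
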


\begin{proof}
For \(s\in[0,1]\), set \(h_s=h_0+sD\).
Every \(h_s\) is a metric because \(D>0\).  The path need not have
positive Ricci curvature in its middle.  We only use positivity near
its two endpoints.  Since positive Ricci curvature is an open
condition, the smallest eigenvalue of \(\Ric_{h_s}\) relative to
\(h_s\) remains positive for \(s\) in a neighborhood of \(0\), and
also for \(s\) in a neighborhood of \(1\).  Compactness of \(X\)
then gives \(\delta\in(0,\frac14)\) and \(\rho>0\) such that
\begin{equation}\label{eq:endpoint-Ricci}
                 \Ric_{h_s}\geq2\rho h_s
 \quad\text{for}\quad
 s\in[0,2\delta]\cup[1-2\delta,1].
\end{equation}

We next choose a parametrization that is constant near the endpoints.
Fix
\(\tau_0\in(0,\frac14)\), and define the standard flat function
\[
 \vartheta(x)=
 \begin{cases}
   0,&x\leq0,\\
   e^{-1/x},&x>0.
 \end{cases}
\]
Then \(\beta(\tau)=\vartheta(\tau-\tau_0)
\vartheta(1-\tau_0-\tau)\) is smooth on \([0,1]\), vanishes on
\([0,\tau_0]\cup[1-\tau_0,1]\), and is strictly positive on
\((\tau_0,1-\tau_0)\).  Define
\[
             \sigma(\tau)
                =\frac{\displaystyle\int_0^\tau\beta(r)\,dr}
                       {\displaystyle\int_0^1\beta(r)\,dr}.
\]
Then \(\sigma:[0,1]\to[0,1]\) is smooth and nondecreasing, is
identically \(0\) near \(0\), is identically \(1\) near \(1\), and
is strictly increasing between those two constant regions.  Thus
\(\sigma\) has all the properties needed below.

For \(L>0\), define \(s(t)=\sigma(t/L)\) and
\(h(t)=h_{s(t)}\).
Then
\begin{equation}\label{eq:h-derivatives}
        \dot h=\frac1L\sigma'(t/L)D,
        \qquad
        \ddot h=\frac1{L^2}\sigma''(t/L)D.
\end{equation}
In particular, \(\dot h=\ddot h=0\) near \(t=0,L\).

Let \(K=s^{-1}([\delta,1-\delta])\).
The compact set
\(\sigma^{-1}([\delta,1-\delta])\) lies in the region where
\(\sigma'>0\).  Therefore there are constants \(a,b>0\), depending
only on \(\sigma\) and \(\delta\), such that on \(K\)
\[
                         t\geq aL,
              \qquad
                         s'(t)\geq\frac bL.
\]
Indeed, one may take
\(a=\min\sigma^{-1}([\delta,1-\delta])>0\) and
\(b=\min_{\sigma^{-1}([\delta,1-\delta])}\sigma'>0\).
Consequently,
\begin{equation}\label{eq:tsprime}
                              t\,s'(t)\geq ab
                              \qquad\text{on }K.
\end{equation}
This estimate explains the scaling of the parameter interval: slowing
the path makes \(\dot h,\ddot h\) small without making the product
\(t\,s'(t)\) small.

We now fix uniform comparison constants.
Because \(X\times[0,1]\) is compact and \(D>0\), there is \(d>0\)
such that
\begin{equation}\label{eq:D-lower}
                                D\geq d h_s
                                \quad\text{for every }s\in[0,1].
\end{equation}
Put \(M_1=\|\sigma'\|_{L^\infty[0,1]}\) and
\(M_2=\|\sigma''\|_{L^\infty[0,1]}\).
By Lemma~\ref{lem:uniform-path}, smoothness and compactness give finite
constants \(C_D,C_\nabla,C_{\tr},C_R\) such that, for all
\(s\in[0,1]\),
\begin{align}
 |D|_{h_s}&\leq C_D,                                             \label{eq:CD}\\
 |\nabla^{h_s}D|_{h_s}&\leq C_\nabla,                              \label{eq:CDer1}\\
 |d(\tr_{h_s}D)|_{h_s}&\leq C_{\tr},                             \label{eq:CDer2}\\
 \Ric_{h_s}&\geq-C_Rh_s.                                        \label{eq:CRic}
\end{align}

Equations \eqref{eq:h-derivatives} and \eqref{eq:CD} give the
pointwise bounds
\begin{equation}\label{eq:hdot-bounds}
       |\dot h|_h\leq\frac{M_1C_D}{L},
       \qquad
       |\ddot h|_h\leq\frac{M_2C_D}{L^2}.
\end{equation}
Write the terms in the tangential block involving derivatives of
\(h\) as
\[
 E=-\frac12\ddot h
   +\frac12\dot h\circ\dot h
   -\frac14(\tr_h\dot h)\dot h,
\]
where \((\dot h\circ\dot h)(v,w)
=\sum_i\dot h(v,e_i)\dot h(w,e_i)\).
In an \(h\)-orthonormal frame,
\[
 |\dot h\circ\dot h|_h
     =|\dot h^2|_{\mathrm{HS}}
     \leq|\dot h|_h^2,
\]
so the triangle inequality, the bound
\(|\tr_h\dot h|\leq\sqrt m\,|\dot h|_h\), and
\eqref{eq:hdot-bounds} give
\begin{equation}\label{eq:E-bound}
                       |E|_h\leq\frac{C_E}{L^2},
\end{equation}
where one may take
\(C_E=\frac12M_2C_D+
(\frac12+\frac{\sqrt m}{4})M_1^2C_D^2\).
Fix \(L\geq\max\{1,\sqrt{C_E/\rho}\}\),
so that
\begin{equation}\label{eq:L-choice}
                              \frac{C_E}{L^2}\leq\rho.
\end{equation}

Outside \(K\), the value of \(s\) lies in
\([0,\delta)\cup(1-\delta,1]\), so
\eqref{eq:endpoint-Ricci} applies.  By
\eqref{eq:E-bound}--\eqref{eq:L-choice}, \(E\geq-\rho h\) there.
Since \(F'=Qt\geq0\) and \(\dot h=s'D\geq0\), the final term in
\eqref{eq:block-T} is nonnegative for every \(Q\geq0\).  Thus
\begin{equation}\label{eq:T-end}
                             T\geq\rho h
                             \qquad\text{outside }K.
\end{equation}
This choice of \(L\) is independent of \(Q\).

On \(K\), equations \eqref{eq:CRic} and \eqref{eq:E-bound}, together
with \(L\geq1\), give \(\Ric_h+E\geq-(C_R+C_E)h\).
Set \(C=C_R+C_E\).  Using \(F'=Qt\), \(\dot h=s'D\),
\eqref{eq:tsprime}, and \eqref{eq:D-lower}, we obtain
\[
 \frac12F'\dot h
       =\frac12Qt\,s'D
       \geq\frac12Qabd\,h
       \qquad\text{on }K.
\]
Therefore
\begin{equation}\label{eq:T-middle}
                      T\geq
                      \left(-C+\frac12Qabd\right)h
                      \qquad\text{on }K.
\end{equation}
It is therefore enough, at this stage, to require
\begin{equation}\label{eq:Q-middle-choice}
                          Q\geq\frac{2(C+1)}{abd}.
\end{equation}
Then the right-hand side of \eqref{eq:T-middle} is at least \(h\).
Combining \eqref{eq:T-end} and \eqref{eq:T-middle}, we get
\begin{equation}\label{eq:T-uniform}
                              T\geq c h
\end{equation}
for \(c=\min\{\rho,1\}>0\).  Increasing \(Q\) later only increases
\(T\), because \(t\dot h\geq0\).

It remains to control the normal and mixed components.  For
\(F(t)=Qt^2/2\), formula \eqref{eq:block-A} gives
\(A=-\frac12\tr_h\ddot h+\frac14|\dot h|_h^2+Q\).
Since \(|\tr_h\ddot h|\leq\sqrt m\,|\ddot h|_h\),
\eqref{eq:hdot-bounds} gives
\begin{equation}\label{eq:A-B-bounds}
                    A\geq Q-\frac{C_A}{L^2},
              \qquad
                    C_A=\frac{\sqrt m}{2}M_2C_D.
\end{equation}

For the mixed block, use \(\dot h=s'D\) in
\eqref{eq:block-B}.  Since \(s'\) is constant in the \(X\)-variables,
\begin{equation}\label{eq:B-expanded}
 B(v)=\frac{s'}2\left[
      -v(\tr_hD)+\sum_i(\nabla^h_{e_i}D)(v,e_i)
      \right].
\end{equation}
The contraction in the second term satisfies
\(\left|\sum_i(\nabla^h_{e_i}D)(v,e_i)\right|
\leq \sqrt m\,|\nabla^hD|_h\,|v|_h\).
Equations \eqref{eq:CDer1}--\eqref{eq:CDer2} and
\(|s'|\leq M_1/L\) therefore give
\begin{equation}\label{eq:B-bound-explicit}
                              |B(v)|\leq\frac{C_B}{L}|v|_h.
\end{equation}
For instance, one may take
\(C_B=\frac{M_1}{2}(C_{\tr}+\sqrt m\,C_\nabla)\).
This calculation also makes explicit that \(B\) contains no derivative
of \(F\).

With respect to \(\R\partial_t\oplus TX\), the tensor
\(\Ric_G+\Hess F\) has block matrix
\(\left(\begin{smallmatrix}A&B\\ B&T\end{smallmatrix}\right)\).
By \eqref{eq:T-uniform}, \(T^{-1}\leq c^{-1}h^{-1}\), and hence
\(B\,T^{-1}B^*\leq C_B^2/(cL^2)\).
In addition to \eqref{eq:Q-middle-choice}, it is enough to choose
\begin{equation}\label{eq:final-Q-choice}
 Q>\max\left\{
       \frac{2(C+1)}{abd},
       \frac{C_A}{L^2}+\frac{C_B^2}{cL^2}
      \right\}.
\end{equation}
Then \(A-BT^{-1}B^*>0\), and the Schur-complement criterion proves
\(\Ric_G+\Hess_G F>0\).  The quantities
\(\delta,\rho,\sigma,d,C_D,C_\nabla,C_{\tr},C_R,C_E,L\) were all fixed
before \(Q\), so none of them depends on \(Q\).

Finally, we verify the boundary data.
The function \(\sigma\) is constant near its endpoints, hence
\(\dot h=0\) there.  By Lemma~\ref{lem:blocks}, both boundary
components are totally geodesic.  More explicitly, the outward normal
is \(-\partial_t\) at \(t=0\) and \(+\partial_t\) at \(t=L\), so the
two second fundamental forms are respectively
\(-\dot h(0)/2=0\) and \(\dot h(L)/2=0\).  Since \(F\) depends only
on \(t\), it is constant on each boundary component, and
\(F'(0)=0\), \(F'(L)=QL\).
\end{proof}

\begin{remark}
The positive tangential term is \(Qt\,s'(t)D/2\).
It vanishes at \(t=0\), as it must because \(F'(0)=0\).  The original
endpoint metric already has positive Ricci curvature, so no additional
Hessian contribution is needed near \(t=0\).  On the region where
\(\Ric_{h_s}\) need not be positive, the product \(t\,s'\) has the
uniform lower bound \(ab\).
\end{remark}

\section{The cap construction}
\label{sec:cap}

The cylinder ends with a generally large normal derivative \(QL\).
The next lemma constructs a cap with matching boundary data.

\begin{lemma}\label{lem:cap}
Let \(m\geq2\).  For every radius \(r>0\) and every
\(\lambda\in\R\), there is a weighted metric
\((g_D,e^{-u})\) on \(D^{m+1}\) such that
\begin{enumerate}[label=\textup{(\roman*)}]
\item \(\Ric_{g_D}+\Hess_{g_D}u>0\);
\item the boundary is a round sphere of radius \(r\);
\item the boundary is totally geodesic;
\item \(u\) is constant on the boundary and
      \(\nu(u)=\lambda\), where \(\nu\) is the outward unit normal.
\end{enumerate}
\end{lemma}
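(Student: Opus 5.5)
We take the cap to be rotationally symmetric, \(g_D=dt^2+\varphi(t)^2g_{\Sph^m}\) on \([0,\ell]\times\Sph^m\) with the origin collapsed, and \(u=u(t)\) radial. The function \(\varphi\) is odd at \(0\) with \(\varphi'(0)=1\), and \(u\) is even at \(0\). For such metrics \(\Ric+\Hess u\) is diagonal, with normal eigenvalue
\[
 N=-m\frac{\varphi''}{\varphi}+u''
\]
and tangential eigenvalue
\[
 T=(m-1)\frac{1-(\varphi')^2}{\varphi^2}-\frac{\varphi''}{\varphi}+u'\frac{\varphi'}{\varphi}.
\]
The boundary requirements become \(\varphi(\ell)=r\), \(\varphi'(\ell)=0\) (totally geodesic, since \(\II=\varphi'\varphi\,g_{\Sph^m}\) for the outward normal \(\partial_t\)), and \(u'(\ell)=\lambda\). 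Constancy of \(u\) on the boundary is automatic.

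\emph{Case \(\lambda\geq0\).} Take the round hemisphere \(\varphi=r\sin(t/r)\), \(\ell=\pi r/2\). Choose \(u'=\lambda\psi\), where \(\psi\) is nondecreasing, vanishes near \(0\), and equals \(1\) near \(\ell\). Then \(u''\geq0\) and \(u'\varphi'/\varphi\geq0\). Both eigenvalues are therefore at least \(m/r^2>0\).

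\emph{Case \(\lambda<0\), the relevant one.} Here the cap must absorb the large \(H^F\) of Lemma~\ref{lem:zero-cylinder}. On the hemisphere alone \(u'\) cannot drop far, since \(u''>-m/r^2\) and \(u'\varphi'/\varphi<0\) near the centre. We instead shape \(\varphi\) in three stages:
\begin{enumerate}[label=\textup{(\arabic*)}]
\item a round piece \(\varphi=R\sin(t/R)\) with \(R>r\), on which \(u\equiv\) const, going past the equator so that \(\varphi'<0\);
\item a strongly concave stretch where \(\varphi'\in(-1,0)\) and \(\varphi''\leq-\kappa\varphi\) with \(\kappa\) large;
\item a final short stretch where \(\varphi''\geq0\) is small, bringing \(\varphi'\) back to \(0\) exactly when \(\varphi=r\).
\end{enumerate}
In stage (2) we let \(u'\) decrease from \(0\) to some value \(\mu<\lambda\). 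The budget is \(u''>-m\kappa\), so with \(\kappa\) large the drop can be arbitrary. Tangentially, \(u'\varphi'/\varphi\geq0\) because both factors are \(\leq0\), and \(-\varphi''/\varphi>0\), so \(T>0\). In stage (3) we let \(u'\) increase from \(\mu\) to \(\lambda\) with \(u''>0\) chosen to dominate \(-m\varphi''/\varphi\), which we keep tiny. Tangentially, \(u'\varphi'/\varphi\geq0\) still holds, and \((m-1)(1-(\varphi')^2)/\varphi^2\) dominates the small \(\varphi''/\varphi\). This is where \(m\geq2\) is used: on an almost cylindrical neck the tangential Ricci is only \((m-1)/\varphi^2\).

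\emph{Main obstacle.} The delicate point is the bookkeeping among the constants: the concavity \(\kappa\), the length of stage (2), and the smallness of \(\varphi''\) in stage (3), subject to hitting \(\varphi(\ell)=r\) with \(\varphi'(\ell)=0\), keeping \(|\varphi'|<1\), and making all transitions smooth. I would first fix \(\varphi\) by gluing profiles with flat cutoffs, in the style of \(\sigma\) in Lemma~\ref{lem:zero-cylinder}, choosing \(R\) so that the total decrease of \(\varphi\) in stages (2)--(3) is \(R-r\). I would then choose \(u'\) last, as a monotone-down, monotone-up function with prescribed slopes in the two stages. As a fallback, if the smooth bookkeeping resists, I would build stages (1)--(2) and stage (3) as separate weighted pieces with totally geodesic or convex interfaces and join them by Theorem~\ref{thm:gluing}.
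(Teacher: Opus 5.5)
Your case \(\lambda\geq0\) is correct. The case \(\lambda<0\) is the one actually needed (\(\lambda=-QL\) with \(QL\) large), and there your construction cannot work. This is a genuine obstruction, not a bookkeeping issue. The normal condition \(N>0\) says \(u''>m\varphi''/\varphi\). Integrate it over \([a,\ell]\) and use
\[
 \int_a^\ell\frac{\varphi''}{\varphi}\,dt
 =\frac{\varphi'(\ell)}{\varphi(\ell)}-\frac{\varphi'(a)}{\varphi(a)}
  +\int_a^\ell\Bigl(\frac{\varphi'}{\varphi}\Bigr)^2dt
\]
together with \(\varphi'(\ell)=0\). This gives \(u'(a)<u'(\ell)=\lambda\) at every \(a\) with \(\varphi'(a)\leq0\). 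At the start of your stage (2) you have \(\varphi'<0\) and \(u'=0\), which forces \(\lambda>0\).

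In terms of your stages, let \(\varphi_*,\varphi'_*\) be the values at the junction of (2) and (3). Since \(\varphi\) is decreasing, the drop of \(u'\) in stage (2) is less than \(m|\varphi'_*|/\varphi_*\). The rise forced in stage (3) exceeds \(\frac{m}{\varphi_*}\int\varphi''\,dt=m|\varphi'_*|/\varphi_*\). Keeping \(\varphi''\) small there only lengthens stage (3) and does not reduce this.

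Your claim that ``with \(\kappa\) large the drop can be arbitrary'' is also false. Since \(\varphi\geq r\) and \(\varphi'\) stays in \((-1,0)\), stage (2) has length below \(1/(\kappa r)\), and the total drop is below \(m/r\).

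The fallback through Theorem~\ref{thm:gluing} does not escape this either. At a rotationally symmetric interface of radius \(\rho\), the hypotheses \(\II_1+\II_2\geq0\) and \(H^{f_1}_1+H^{f_2}_2\geq0\) say exactly that \(\varphi'_{\mathrm{in}}\geq\varphi'_{\mathrm{out}}\) and \(u'_{\mathrm{in}}-u'_{\mathrm{out}}\leq m(\varphi'_{\mathrm{in}}-\varphi'_{\mathrm{out}})/\rho\). This is the distributional form of \(N\geq0\), and the identity above holds verbatim for piecewise smooth profiles. So the gluing hypotheses cannot be met in your configuration.

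The identity says more: wherever \(\varphi'\leq0\) one already has \(u'<\lambda<0\). So near the tip, where \(u'=0\), the profile must be increasing. The negative term \(u'\varphi'/\varphi\) in \(T\) therefore cannot be avoided by a sign choice; it has to be dominated. That is the paper's mechanism.

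First, the drop of \(u'\) is concentrated at a tiny radius \(\eta\), where \(\varphi'\) falls from about \(1\) to about \(0\). There the budget \(m\int(-\varphi''/\varphi)\) is of order \(m/\eta\gg|\lambda|\). The paper realizes this drop as a kink: it glues a round cap \(\sin(t-t_0)\) with constant potential to the annulus by Theorem~\ref{thm:gluing}, checking \(m(c_\eta-p_\eta)/\eta+\lambda>0\).

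Second, a long neck \(\widetilde\alpha=1-\frac a2t^2\) carries \(u'\equiv\lambda\). It raises the radius from \(\eta\) to \(1\) with \(0\leq\widetilde\alpha'\leq p_0\leq(m-1)/(4(|\lambda|+m))\). Then \((m-1)(1-(\widetilde\alpha')^2)/\widetilde\alpha^2\) dominates \(|\lambda|\widetilde\alpha'/\widetilde\alpha\), and this is where \(m\geq2\) really enters. Rescaling then handles general \(r\).
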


\begin{proof}
We first treat the case \(r=1\), with the outer boundary placed at
\(t=0\).  Set
\[
 p_0:=\min\left\{
       \frac14,\frac{m-1}{4(|\lambda|+m)}
       \right\},
 \qquad
 a:=\frac{p_0^2}{2},
 \qquad
 \eta:=\min\left\{
       \frac14,\frac{m}{4(|\lambda|+1)}
       \right\},
\]
and let \(T=\sqrt{2(1-\eta)/a}\).  On
\([-T,0]\times\Sph^m\), consider
\begin{equation}\label{eq:outer-profile}
 \widetilde\alpha(t)=1-\frac a2t^2,
 \qquad
 \widetilde u(t)=\lambda t,
 \qquad
 \widetilde g=dt^2+\widetilde\alpha(t)^2g_\circ.
\end{equation}
Then \(\widetilde\alpha(-T)=\eta\),
\(\widetilde\alpha(0)=1\),
\(\widetilde\alpha'(0)=0\), and
\(\widetilde\alpha''=-a\).  Moreover,
\(0\leq\widetilde\alpha'(t)\leq p_\eta\) on the annulus, where
\[
 p_\eta:=\widetilde\alpha'(-T)
        =p_0\sqrt{1-\eta}\leq p_0.
\]

For a warped product \(g=dt^2+\alpha(t)^2g_\circ\) and a radial
potential \(u=u(t)\), the radial and tangential eigenvalues of
\(\Ric_g+\Hess_g u\) are
\begin{align}
 \mathcal R_t
   &=-m\frac{\alpha''}{\alpha}+u'',
   \label{eq:cap-radial}\\
 \mathcal R_\theta
   &=-\frac{\alpha''}{\alpha}
     +(m-1)\frac{1-(\alpha')^2}{\alpha^2}
     +u'\frac{\alpha'}{\alpha}.
   \label{eq:cap-tangent}
\end{align}
All mixed components vanish.  For the metric and potential in
\eqref{eq:outer-profile}, the radial eigenvalue is
\(\mathcal R_t=ma/\widetilde\alpha>0\).  Since
\(0<\widetilde\alpha\leq1\) and
\(0\leq\widetilde\alpha'\leq p_0\), the tangential eigenvalue satisfies
\[
 \mathcal R_\theta
 \geq
 \frac a{\widetilde\alpha}
 +\frac{(m-1)(1-p_0^2)-|\lambda|p_0}
        {\widetilde\alpha^2}.
\]
By the definition of \(p_0\),
\[
 (m-1)(1-p_0^2)-|\lambda|p_0
 \geq
 \frac{15}{16}(m-1)-\frac14(m-1)>0.
\]
Thus \(\Ric_{\widetilde g}+\Hess_{\widetilde g}\widetilde u>0\)
throughout the annulus.

We now close the inner end by a round cap.  Put
\(c_\eta=\sqrt{1-\eta^2}\),
\(t_0=-T-\arcsin\eta\), and define
\(\alpha_-(t)=\sin(t-t_0)\) on \([t_0,-T]\).  Equip this cap with
the metric \(g_-=dt^2+\alpha_-(t)^2g_\circ\) and the constant
potential \(u_-\equiv-\lambda T\).  The metric extends smoothly over
the point \(t=t_0\) and has positive Ricci curvature.  At the common
boundary \(t=-T\), we have
\[
 \alpha_-(-T)=\widetilde\alpha(-T)=\eta,\qquad
 \alpha_-'(-T)=c_\eta,\qquad
 \widetilde\alpha'(-T)=p_\eta.
\]
The boundary values of the two potentials also agree because
\(u_-=-\lambda T=\widetilde u(-T)\).

With respect to the outward normals of the two pieces, the boundary
conditions are
\begin{align*}
 \II_{\mathrm{cap}}+\II_{\mathrm{ann}}
   &=\frac{c_\eta-p_\eta}{\eta}\,g_{\partial},\\
 H^{u_-}_{\mathrm{cap}}
   +H^{\widetilde u}_{\mathrm{ann}}
   &=\frac{m(c_\eta-p_\eta)}{\eta}+\lambda.
\end{align*}
Since \(\eta\leq1/4\) and \(p_\eta\leq1/4\), we have
\(c_\eta-p_\eta>1/2\).  Therefore the first expression is positive,
while the second satisfies
\[
 \frac{m(c_\eta-p_\eta)}{\eta}+\lambda
 \geq\frac{m}{2\eta}-|\lambda|>0.
\]
The last inequality follows from
\(\eta\leq m/[4(|\lambda|+1)]\).

Theorem~\ref{thm:gluing} now joins the round cap and the annulus
without changing the metric or potential near the outer boundary
\(t=0\).  At that boundary,
\(\widetilde\alpha(0)=1\),
\(\widetilde\alpha'(0)=0\), and
\(\widetilde u'(0)=\lambda\).  Hence the resulting weighted metric
on \(D^{m+1}\) has positive Bakry--\'Emery Ricci curvature, unit round
and totally geodesic boundary, constant boundary potential, and
outward normal derivative \(\lambda\).

For general \(r>0\), apply the unit-radius construction with boundary
derivative \(r\lambda\), and then multiply the metric by \(r^2\).
Constant scaling leaves \(\Ric+\Hess u\), regarded as a
\((0,2)\)-tensor, unchanged.  It changes the boundary radius from
\(1\) to \(r\) and divides the outward unit normal by \(r\).
Consequently, the new outward normal derivative is
\((r\lambda)/r=\lambda\).  All the required properties follow.
\end{proof}

\section{Weighted core metrics on twisted spheres}
\label{sec:twisted-core}

\begin{theorem}\label{thm:twisted-core}
Let \(n\geq3\), and let
\(\phi:\Sph^{n-1}\to\Sph^{n-1}\) be a diffeomorphism.  Then
\(\Sigma_\phi=D^n\cup_\phi D^n\)
admits a weighted core metric with \(\Ric_f>0\).
\end{theorem}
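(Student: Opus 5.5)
We plan to verify the criterion of Theorem~\ref{thm:core-completion} on \(\Sigma_\phi\setminus\mathring D^n\). Since \(\Sigma_\phi\) minus a small disk in the interior of the second copy of \(D^n\) is diffeomorphic to \(D^n\cup_\phi([0,1]\times\Sph^{n-1})\), it suffices to build a weighted metric with \(\Ric_f>0\) on a disk glued to a cylinder. The far end of the cylinder must be a round sphere with constant potential, \(\II\geq0\) and \(H^f\geq0\). Write \(m=n-1\geq2\) and let \(g_\circ\) denote the round metric on \(\Sph^m\).

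\textbf{Cylinder.} Take \(h_0=\phi^*(r^2g_\circ)\) for a small radius \(r\), and \(h_1=g_\circ\). Both metrics have positive Ricci curvature. By compactness, \(\phi^*g_\circ\leq Cg_\circ\) for some \(C\), so choosing \(r^2<1/C\) gives \(D=h_1-h_0=g_\circ-r^2\phi^*g_\circ>0\). Lemma~\ref{lem:zero-cylinder} then gives a metric \(G=dt^2+h(t)\) on \([0,L]\times\Sph^m\) and a potential \(F=Qt^2/2\) with \(\Ric_G+\Hess_GF>0\). Both boundary components are totally geodesic, \(F'(0)=0\), \(F'(L)=QL\), \(h(0)=\phi^*(r^2g_\circ)\) and \(h(L)=g_\circ\). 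We may reverse the direction of the path if that is convenient. The point of this choice is that \(\phi\) is an isometry from \((\Sph^m,h_0)\) to the round sphere of radius \(r\).

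\textbf{Cap at \(t=0\).} At \(t=0\) the outward normal is \(-\partial_t\), and \(\nu(F)=-F'(0)=0\). We attach the cap from Lemma~\ref{lem:cap} with radius \(r\) and \(\lambda=0\); it is a disk with round, totally geodesic boundary of radius \(r\). The identification is made via \(\phi\), which is exactly the twisted gluing. Both second fundamental forms vanish, and \(H^f=0-\nu(f)\) vanishes on both sides, since \(\lambda=0\) on the cap and \(F'(0)=0\) on the cylinder. After adding a constant to the cap potential so that boundary values agree, Theorem~\ref{thm:gluing} applies with equality in both conditions. The result is a smooth weighted metric with \(\Ric_f>0\) on \(D^n\cup_\phi([0,L]\times\Sph^m)\).

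\textbf{Free end at \(t=L\).} The boundary at \(t=L\) is the unit round sphere with constant potential, and it is totally geodesic, so \(\II=0\). The outward normal is \(+\partial_t\), so \(H^f=0-F'(L)=-QL<0\), which violates condition (b) of Theorem~\ref{thm:core-completion}. This sign is the main obstacle. The paper's outline (``we then attach a weighted cap whose potential satisfies the boundary condition'') suggests repositioning the pieces so that the large derivative \(QL\) is absorbed by the cap. Concretely, we rearrange so that the cylinder runs from the round end \(h_0=g_\circ\) at \(t=0\), where \(F'=0\), to the twisted end at \(t=L\), where \(F'=QL\). We reverse the ordering \(D>0\) by scaling: take \(h_0=\varepsilon^2g_\circ\) and \(h_1=\phi^*g_\circ\) with \(\varepsilon\) small, so that \(h_1-h_0>0\). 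We then glue the Lemma~\ref{lem:cap} disk of radius \(1\) at \(t=L\) through \(\phi\). On the cylinder side \(H^f=-QL\), and choosing \(\lambda=-QL\) on the cap gives \(H^f_{\mathrm{cap}}=QL\). The sum is \(0\) and \(\II\)'s sum to \(0\), so Theorem~\ref{thm:gluing} applies. The remaining free boundary is the round sphere of radius \(\varepsilon\) at \(t=0\), which is totally geodesic with \(H^f=0+F'(0)=0\) and constant potential.

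\textbf{Rescaling.} Finally we rescale the whole metric by \(\varepsilon^{-2}\). This keeps \(\Ric_f\) positive, since it is scale invariant as a \((0,2)\)-tensor, makes the free boundary the unit round sphere, and preserves \(\II=0\) and \(H^f=0\). Theorem~\ref{thm:core-completion} then yields the weighted core metric on \(\Sigma_\phi\). The step to check carefully is the sign bookkeeping for outward normals and \(H^f=H-\nu(f)\) at each interface, since that determines which end receives the cap with \(\lambda=-QL\).
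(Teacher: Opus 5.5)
Your final arrangement is correct and is essentially the paper's proof. You put the round end $\varepsilon^2 g_\circ$ at $t=0$ (where $F'=0$) and the twisted end $\phi^*g_\circ$ at $t=L$, glue the latter through $\phi$ to the Lemma~\ref{lem:cap} disk with $\lambda=-QL$ so that both boundary sums vanish, and apply core completion at the free round end. The paper does the same with $h_0=g_\circ$, $h_1=R^2\phi^*g_\circ$ and a cap of radius $R$ (your $R=1/\varepsilon$), so it needs no final rescaling. Discard your first arrangement with the cap at $t=0$: as you noticed, it leaves $H^f=-QL<0$ at the free end.
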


\begin{proof}
Let \(g_\circ\) be the unit round metric on \(\Sph^{n-1}\).
Because the sphere is compact, \(g_\circ\) and
\(\phi^*g_\circ\) are uniformly equivalent.  More explicitly, the
smallest eigenvalue of \(\phi^*g_\circ\) relative to \(g_\circ\) has
a positive minimum \(\mu>0\).  Choose \(R>0\) so large that
\(R^2\mu>1\).  Then
\begin{equation}\label{eq:D-on-sphere}
                         R^2\phi^*g_\circ-g_\circ>0.
\end{equation}
The metrics \(h_0=g_\circ\) and \(h_1=R^2\phi^*g_\circ\)
both have positive Ricci curvature.  Indeed, \(h_1\) is the pullback
of a scaled round metric, and, as a \((0,2)\)-tensor,
\[
 \Ric_{h_1}
    =\phi^*\Ric_{R^2g_\circ}
    =(n-2)\phi^*g_\circ
    =\frac{n-2}{R^2}h_1>0.
\]
The restriction \(n\geq3\) is used here: it gives \(n-2>0\).
Apply Lemma~\ref{lem:zero-cylinder}.  We obtain the weighted cylinder
\(([0,L]\times\Sph^{n-1},dt^2+h(t),e^{-F})\), where
\(F(t)=Qt^2/2\), with positive Bakry--\'Emery Ricci curvature and
totally geodesic ends.

Apply Lemma~\ref{lem:cap} with \(r=R\) and \(\lambda=-QL\).
The cap boundary is \((\Sph^{n-1},R^2g_\circ)\).
The map \(\phi\) is an isometry from the \(t=L\) boundary of the
cylinder to the cap boundary because \(\phi^*(R^2g_\circ)=h_1\).
Add a constant to the cap potential so that it agrees with \(F(L)\)
on the gluing hypersurface.  If the original cap boundary value is \(c\), replace
the cap potential \(u\) by \(u+F(L)-c\).  This changes neither
\(\Hess u\) nor the prescribed normal derivative.

We now check the boundary conditions for gluing the cylinder to the
cap.  Both boundary components are totally geodesic.  Hence, after
pulling the cap tensor back by \(\phi\),
\[
             \II_{\mathrm{cyl}}+\phi^*\II_{\mathrm{cap}}=0.
\]
At \(t=L\), the outward unit normal of the cylinder is
\(+\partial_t\), and therefore
\[
             H^F_{\mathrm{cyl}}
             =-\nu(F)=-F'(L)=-QL.
\]
By Lemma~\ref{lem:cap} and the choice \(\lambda=-QL\), the outer
boundary of the cap satisfies
\[
             H^u_{\mathrm{cap}}
             =-\nu(u)=-\lambda=QL.
\]
Consequently,
\[
             H^F_{\mathrm{cyl}}+H^u_{\mathrm{cap}}=0.
\]
The boundary values of the potentials agree after adding the constant
chosen above.  Thus all hypotheses of
Theorem~\ref{thm:gluing} are satisfied, and equality in both boundary
conditions in Theorem~\ref{thm:gluing} is allowed. After smoothing, we obtain a metric with positive Bakry--\'Emery Ricci curvature on
\begin{equation}\label{eq:Wphi}
        W_\phi
           =([0,L]\times\Sph^{n-1})\cup_\phi D^n.
\end{equation}
We next check the topology, without assuming that \(\phi\) extends
over a disk.  Let
\[
             D^n_{\mathrm{col}}
                =D^n\cup_{\mathrm{id}}
                   ([0,L]\times\Sph^{n-1})
\]
denote a disk with an external collar, where the \(t=L\) end of the
collar is attached to \(\partial D^n\).  Define a map
\(\Psi:W_\phi\to D^n_{\mathrm{col}}\) by using the identity on the
cap and, on every collar slice, setting
\[
                         \Psi(t,x)=(t,\phi(x)).
\]
At \(t=L\), the point \((L,x)\) is identified in \(W_\phi\) with
\(\phi(x)\) on the cap, while \((L,\phi(x))\) is identified in
\(D^n_{\mathrm{col}}\) with the same cap point \(\phi(x)\).
Therefore \(\Psi\) is well defined at the gluing hypersurface. Using \(y=\phi(x)\) as the boundary coordinate on the cap, the map
\(\Psi\) is the identity in the collar coordinate \(y\). Hence
\(\Psi\) is a diffeomorphism, so \(W_\phi\) is a disk. Thus \(\phi\) was used
only on the collar; no extension of \(\phi\) across the cap was
assumed.

To locate this disk inside the twisted sphere, write
\(\Sigma_\phi=D_A^n\cup_\phi D_B^n\).
Use a collar of \(\partial D_A^n\), with its outer end attached to
\(D_B^n\), for the cylinder in \eqref{eq:Wphi}; use \(D_B^n\) for
the cap.  The complement of this collar and \(D_B^n\) is a smaller
open disk in the interior of \(D_A^n\).  Hence
\[
                       W_\phi\cong
                       \Sigma_\phi\setminus\mathring D^n.
\]
This identification preserves the boundary parametrization induced by
the collar at \(t=0\). The only remaining boundary is \(t=0\).  Its outward normal relative
to \(W_\phi\) is \(-\partial_t\).  Since \(h\) is constant and
\(F'(0)=0\) near this boundary, one has
\(g_{\partial W_\phi}=g_\circ\), \(F|_{\partial W_\phi}=0\),
\(\II=0\), and \(H^F=0\).
The smoothing at the other end of the cylinder does not change these
data, because Theorem~\ref{thm:gluing} changes the metric and
potential only in an arbitrarily small neighborhood of the
\(t=L\) gluing hypersurface.  Thus the boundary metric is round, the boundary value
of the potential is constant, and \(\II=H^F=0\).  All hypotheses of
Theorem~\ref{thm:core-completion} are satisfied on
\(\Sigma_\phi\setminus\mathring D^n\), so core completion produces a
weighted core metric on the closed twisted sphere \(\Sigma_\phi\).
\end{proof}

\begin{remark}
Core completion is essential here.  The weighted gluing theorem may
alter the metric and the potential in a collar of the gluing hypersurface, and hence
does not by itself preserve a prescribed round hemisphere.
Theorem~\ref{thm:core-completion} turns the punctured metric with
\(\II\geq0\) and \(H^f\geq0\) into a closed metric containing an exact
round hemisphere on which the potential is constant.
\end{remark}

\begin{corollary}\label{cor:all-homotopy-spheres}
Every homotopy sphere of dimension \(n\geq7\) admits a weighted core
metric with \(\Ric+\Hess f>0\).
\end{corollary}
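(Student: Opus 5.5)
The corollary should follow by combining the Smale twisted-sphere theorem recorded in the preliminaries with Theorem~\ref{thm:twisted-core}. I would argue as follows.

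Let \(\Sigma^n\) be a homotopy sphere with \(n\geq7\). By the theorem attributed to \cite{Smale1962}, there is a diffeomorphism \(\phi:\Sph^{n-1}\to\Sph^{n-1}\) and a diffeomorphism \(\Sigma\cong D^n\cup_\phi D^n=\Sigma_\phi\). Since \(n\geq7\geq3\), Theorem~\ref{thm:twisted-core} gives a weighted core metric \((g,e^{-f})\) on \(\Sigma_\phi\) with \(\Ric_g+\Hess_g f>0\). By definition, this means that there is an isometric embedding of the unit round hemisphere on which \(f\) is constant.

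Pulling back \(g\) and \(f\) along the diffeomorphism \(\Sigma\to\Sigma_\phi\) then yields a weighted metric on \(\Sigma\) with the same properties. This works because \(\Ric+\Hess f\) is natural under diffeomorphisms: the pulled-back tensor is the pullback of a positive definite tensor. Composing the hemisphere embedding with the inverse diffeomorphism gives an isometric embedding of the unit round hemisphere into \(\Sigma\), and the pulled-back potential is constant on it. Hence \(\Sigma\) carries a weighted core metric.

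There is essentially no obstacle. The one point to check is that the definition of a weighted core metric is invariant under diffeomorphism, which is immediate because every ingredient (the metric, the potential, the isometric embedding) is transported by pullback. The dimension restriction \(n\geq7\) enters only through Smale's theorem; the geometric input, Theorem~\ref{thm:twisted-core}, already works for \(n\geq3\).
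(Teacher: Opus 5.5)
Your proposal is correct and follows the paper's own proof: Smale's $h$-cobordism input presents $\Sigma$ as a twisted sphere $D^n\cup_\phi D^n$, and Theorem~\ref{thm:twisted-core} then supplies the weighted core metric. Your explicit remark that the weighted core condition is invariant under pullback by the diffeomorphism $\Sigma\cong\Sigma_\phi$ is a harmless detail that the paper leaves implicit.
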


\begin{proof}
By the \(h\)-cobordism input in \cite{Smale1962}, every such
homotopy sphere is diffeomorphic to a twisted sphere.  Apply
Theorem~\ref{thm:twisted-core}.  No index-theoretic property of the
sphere is needed for this construction.
\end{proof}

Let \(\Theta_n\) denote the group of oriented homotopy \(n\)-spheres
under connected sum~\cite{KervaireMilnor}. This gives the following stability result.
\begin{theorem}\label{thm:theta-stability}
Let \(n\geq7\), let \(M^n\) be a closed connected smooth manifold, and let
\(\Sigma\in\Theta_n\).  Then
\[
\begin{split}
 M\text{ admits }(g,f)\text{ with }\Ric_g+\Hess_gf>0\\
 \Longleftrightarrow
 M\#\Sigma\text{ admits such a weighted metric}.
\end{split}
\]
\end{theorem}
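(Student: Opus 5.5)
Both directions reduce to Theorem~\ref{thm:weighted-sum} (the two-summand case of Reiser--Tripaldi's Theorem~B), applied with \(q=\infty\), together with Corollary~\ref{cor:all-homotopy-spheres} and the group structure of \(\Theta_n\).

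\emph{Forward direction.} Suppose \(M\) carries \((g,f)\) with \(\Ric_g+\Hess_gf>0\). By Corollary~\ref{cor:all-homotopy-spheres}, since \(n\geq7\), the homotopy sphere \(\Sigma\) admits a weighted core metric. Theorem~\ref{thm:weighted-sum} with \(M_0=M\) and \(M_1=\Sigma\) then gives a weighted metric with \(\Ric_f>0\) on \(M\#\Sigma\).

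\emph{Reverse direction.} I will use that \(\Theta_n\) is a group under connected sum. Let \(-\Sigma\) be the inverse of \(\Sigma\), namely \(\Sigma\) with reversed orientation. Then \(\Sigma\#(-\Sigma)\) is diffeomorphic to \(\Sph^n\), and connected sum with \(\Sph^n\) does not change the diffeomorphism type. Hence
\[
 (M\#\Sigma)\#(-\Sigma)\cong M\#(\Sigma\#(-\Sigma))\cong M\#\Sph^n\cong M.
\]
The homotopy sphere \(-\Sigma\) also has dimension \(n\geq7\), so Corollary~\ref{cor:all-homotopy-spheres} gives it a weighted core metric. Applying Theorem~\ref{thm:weighted-sum} with \(M_0=M\#\Sigma\) and \(M_1=-\Sigma\) gives the required weighted metric on \(M\).

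\emph{Main obstacle: orientations.} The connected sum of unoriented manifolds is not well defined in general, so the identification \((M\#\Sigma)\#(-\Sigma)\cong M\) needs care. If \(M\) is orientable, I fix an orientation and use oriented connected sum. Associativity of oriented connected sum and the fact that \(\Sph^n\) is the identity element then give the diffeomorphism above.

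If \(M\) is non-orientable, the disk can be moved around an orientation-reversing loop, so \(M\#\Sigma\cong M\#(-\Sigma)\). Both sums remain well defined, and the same cancellation argument applies; here \(\Sigma\#\Sigma\) is related to \(\Sigma\#(-\Sigma)\) appropriately. The geometric input does not depend on any of these choices, because Corollary~\ref{cor:all-homotopy-spheres} applies to every element of \(\Theta_n\).
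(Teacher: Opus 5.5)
Your proof is correct and follows the paper's argument: the forward direction combines a weighted core metric on \(\Sigma\) (from the twisted-sphere presentation, Corollary~\ref{cor:all-homotopy-spheres}) with Theorem~\ref{thm:weighted-sum}, and the converse applies the same implication to \(M\#\Sigma\) and \(-\Sigma\). Your discussion of orientations goes beyond the paper, which does not address this point; the vague remark about \(\Sigma\#\Sigma\) in the non-orientable case can be replaced by a simpler observation. Perform both connected sums inside one coordinate ball of \(M\), and choose the orientation of the second summand so that it is \(-\Sigma\) relative to that ball's orientation; this recovers \(M\) whether or not \(M\) is orientable.
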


\begin{proof}
The standard sphere is trivially a twisted sphere, and every exotic
sphere of dimension at least seven is a twisted sphere by the
topological input in Section~\ref{sec:preliminaries}.  Hence
Theorem~\ref{thm:twisted-core} gives a weighted core metric on every
\(\Sigma\in\Theta_n\).

If \(M\) has \(\Ric_f>0\), Theorem~\ref{thm:weighted-sum} gives
\(\Ric_f>0\) on \(M\#\Sigma\).
Conversely, apply the same implication to \(M\#\Sigma\) and the
inverse homotopy sphere \(-\Sigma\), noting that
\((M\#\Sigma)\#(-\Sigma)\cong M\#\Sph^n\cong M\).
\end{proof}

\section{Proof of the main theorem and further examples}
\label{sec:main-proof}

We first record the obstruction needed below.  It is a standard
consequence of Hitchin's spinorial argument and the holonomy
classification for manifolds with parallel spinors
\cite{Hitchin,Wang}.

\begin{lemma}\label{lem:Ric-obstruction}
Let \(n=8k+1\) or \(8k+2\), where \(k\geq1\), and let \(M^n\) be a
closed simply connected spin rational homology sphere.  If
\(\alpha(M)\neq0\), then \(M\) admits no Riemannian metric with
\(\Ric\geq0\).
\end{lemma}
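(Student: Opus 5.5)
We argue by contradiction and suppose that \(g\) is a metric on \(M\) with \(\Ric_g\geq0\). Since \(M\) is simply connected, it has a unique spin structure. In dimensions \(n\equiv1,2\pmod 8\), Hitchin's analysis of the real Clifford-linear Dirac operator \cite{Hitchin} identifies \(\alpha(M)\in KO_n\cong\Z/2\) with the mod-\(2\) dimension of the kernel of the complex Dirac operator when \(n\equiv1\), and with the mod-\(2\) complex dimension of the kernel of the quaternionic-structured half operator when \(n\equiv2\). In particular, \(\alpha(M)\neq0\) forces a nonzero harmonic spinor \(\psi\) for \(g\). This holds for every metric, because \(\alpha\) is a spin-bordism invariant.

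The next step is the Schr\"odinger--Lichnerowicz formula \(D^2=\nabla^*\nabla+\Scal/4\). Integrating against \(\psi\) gives
\[
0=\int_M|\nabla\psi|^2+\frac{\Scal}{4}|\psi|^2 .
\]
Since \(\Ric\geq0\) implies \(\Scal\geq0\), we get \(\nabla\psi=0\). A parallel spinor forces \(\Ric=0\), by the standard curvature identity \(\Ric(X)\cdot\psi=0\) together with the fact that Clifford multiplication by a nonzero vector is injective. Thus \(g\) is Ricci-flat and has \(\Ric\geq0\) on a closed manifold. The Cheeger--Gromoll splitting theorem then shows that the universal cover, which is \(M\) itself, splits isometrically as \(\R^j\times N\) with \(N\) compact. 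Compactness of \(M\) gives \(j=0\), so no flat factor appears.

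Simple connectivity and completeness allow us to apply the de~Rham decomposition theorem, which writes \(M=N_1\times\cdots\times N_r\) with each \(N_i\) irreducible, Ricci-flat, compact and simply connected. A rational homology sphere has Poincar\'e polynomial \(1+t^n\), and a product of closed manifolds of positive dimension has nonzero Betti numbers in intermediate degrees by the K\"unneth formula. Hence \(r=1\) and \(M\) is irreducible. It is not locally symmetric, because a Ricci-flat symmetric space is flat, which contradicts compactness together with simple connectivity.

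By Berger's classification combined with Wang's classification of holonomy groups admitting parallel spinors \cite{Wang}, \(\mathrm{Hol}(g)\) is one of \(SU(m)\) with \(n=2m\), \(Sp(m)\) with \(n=4m\), \(G_2\) with \(n=7\), or \(\Spin(7)\) with \(n=8\). Since \(n\geq9\), the odd case \(n=8k+1\) admits none of these, which is already a contradiction. In the case \(n=8k+2=2m\) with \(m\geq5\), and also in the \(Sp\) case should it arise, the metric is K\"ahler. Its K\"ahler form \(\omega\) is parallel, hence closed with \([\omega]^m\neq0\). This gives \(b_2(M)\geq1\), which contradicts the assumption that \(M\) is a rational homology sphere. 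The main care is needed in this step, namely in making sure that Wang's list together with the irreducibility and compactness inputs really excludes everything. The K\"ahler-class argument settles it cleanly.
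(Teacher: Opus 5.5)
Your proposal is correct and follows essentially the same route as the paper: a nonzero harmonic spinor from $\alpha(M)\neq0$, then parallelism and Ricci-flatness via the Schr\"odinger--Lichnerowicz formula, then irreducibility via de~Rham and K\"unneth, and finally Wang's list with the dimension count and the K\"ahler class in $H^2$. The differences are cosmetic. You obtain $\Ric=0$ from $\Ric(X)\cdot\psi=0$ rather than from $\Scal=0$ together with $\Ric\geq0$, and your Cheeger--Gromoll step is redundant, since a compact simply connected $M$ already has no Euclidean de~Rham factor.
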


\begin{proof}
Suppose that \(g\) is a Riemannian metric on \(M\) with
\(\Ric_g\geq0\).  Since \(\alpha(M)\neq0\), the \(KO\)-valued index of
the spin Dirac operator is nonzero.  Therefore the Dirac operator has
a nonzero harmonic spinor \(\psi\).  The
Schr\"odinger--Lichnerowicz formula and \(\Scal_g\geq0\) give
\[
0=\int_M\langle\D^2\psi,\psi\rangle\,d\vol_g
 =\int_M\left(
      |\nabla\psi|^2+\frac14\Scal_g|\psi|^2
    \right)d\vol_g.
\]
Both terms in the last integrand are nonnegative.  It follows that
\(\nabla\psi=0\) and \(\Scal_g|\psi|^2=0\) everywhere.  Thus \(\psi\)
is parallel.  Since \(\psi\) is nonzero, its norm is a positive
constant, and hence \(\Scal_g=0\) everywhere.  Because
\(\Ric_g\geq0\) and \(\tr_g\Ric_g=\Scal_g=0\), all eigenvalues of
\(\Ric_g\) vanish.  Therefore \(\Ric_g=0\).

We next show that the holonomy representation is irreducible.  If it
were reducible, the global de~Rham decomposition theorem would split
\(M\) as a nontrivial Riemannian product of complete simply connected
manifolds.  Compactness of \(M\) rules out a nonzero Euclidean factor
and implies that all the remaining factors are compact.  The pullback
of the top-dimensional cohomology class of any factor would then give,
by the K\"unneth theorem, a nonzero cohomology class of intermediate
degree on \(M\).  This contradicts the assumption that \(M\) is a
rational homology sphere.  Hence the holonomy representation is
irreducible.

The metric is not flat.  Indeed, a complete simply connected flat
manifold of positive dimension is isometric to Euclidean space and is
therefore noncompact.  Since \(M\) is simply connected, its full
holonomy group agrees with its restricted holonomy group.  The
classification of irreducible nonflat Riemannian manifolds admitting
a parallel spinor therefore leaves the holonomy groups
\(SU(m)\), \(Sp(m)\), \(G_2\), and \(\Spin(7)\)~\cite{Wang}.

These groups occur in real dimensions \(2m\), \(4m\), \(7\), and \(8\),
respectively.  If \(n=8k+1\geq9\), none of these dimensions is
possible.  If \(n=8k+2\), the groups \(Sp(m)\), \(G_2\), and
\(\Spin(7)\) are excluded by dimension, so the only remaining
possibility is \(SU(n/2)\).  In this case \(M\) carries a nonzero
parallel K\"ahler form.  This form is harmonic and therefore
represents a nonzero class in \(H^2(M;\R)\), contradicting again that
\(M\) is a rational homology sphere.  Thus no metric with
\(\Ric_g\geq0\) can exist.
\end{proof}
We now prove Theorem~\ref{thm:main}.

\begin{proof}[Proof of Theorem~\ref{thm:main}]
Let \(\Sigma\) be as in Theorem~\ref{thm:main}.  Its dimension is at
least nine, so it is a twisted sphere.  Theorem~\ref{thm:twisted-core}
therefore supplies a smooth weighted metric with
\(\Ric_g+\Hess_gf>0\).
A homotopy sphere is closed, simply connected, and an integral
homology sphere.  Moreover,
\(H^2(\Sigma;\Z/2)=0\), so \(w_2(T\Sigma)=0\) and \(\Sigma\) is spin;
since \(H^1(\Sigma;\Z/2)=0\), the spin structure is unique.
Lemma~\ref{lem:Ric-obstruction} shows that it has no
metric with \(\Ric\geq0\).  Finally, compactness makes \(g\) complete
and \(f\) bounded, so the hypotheses in Wei--Wylie Question~7.5 are
satisfied.
\end{proof}

The argument also gives the following connected-sum examples.

\begin{theorem}\label{thm:connected-sum-examples}
Let \(n=8k+1\) or \(8k+2\), with \(k\geq1\), and let \(N^n\) be a
closed simply connected spin rational homology sphere.  Suppose that
\(\alpha(N)=0\) and that \(N\) admits \((g,f)\) with
\(\Ric_g+\Hess_g f>0\).
Let \(\Sigma^n\) be any Hitchin sphere, meaning a homotopy sphere with
\(\alpha(\Sigma)\neq0\).  Then \(N\#\Sigma\) admits a weighted metric
with \(\Ric_f>0\), but no metric with
\(\Ric\geq0\).
\end{theorem}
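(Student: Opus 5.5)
The plan has two halves, one for each conclusion.

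\emph{Existence of \(\Ric_f>0\).} Since \(n=8k+1\) or \(8k+2\) with \(k\geq1\), we have \(n\geq9\). Corollary~\ref{cor:all-homotopy-spheres} (equivalently Theorem~\ref{thm:twisted-core}) therefore gives \(\Sigma\) a weighted core metric. By hypothesis \(N\) carries a weighted metric with \(\Ric_f>0\). Theorem~\ref{thm:weighted-sum}, applied with \(M_0=N\) and \(M_1=\Sigma\) and \(q=\infty\), then gives \(N\#\Sigma\) a weighted metric with \(\Ric_f>0\). Equivalently, this is the forward direction of Theorem~\ref{thm:theta-stability}.

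\emph{No metric with \(\Ric\geq0\).} We apply Lemma~\ref{lem:Ric-obstruction} to \(M=N\#\Sigma\), so we must check its hypotheses.

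First, \(N\#\Sigma\) is closed and connected. Since both summands are simply connected and \(n\geq3\), van Kampen shows it is simply connected.

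Second, it is a rational homology sphere. Mayer--Vietoris for the connected sum in dimension \(n\geq3\) gives \(\tilde H_i(N\#\Sigma;\mathbb Q)\cong\tilde H_i(N;\mathbb Q)\oplus\tilde H_i(\Sigma;\mathbb Q)\) for \(0<i<n\), and this vanishes. Equivalently, \(\Sigma\) is homeomorphic to \(\Sph^n\), so \(N\#\Sigma\) is homeomorphic to \(N\) by the topological Poincar\'e conjecture. The Mayer--Vietoris argument avoids citing that.

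Third, it is spin. Both summands are spin with unique spin structures, since \(H^1(\,\cdot\,;\Z/2)=0\). The spin structures restricted to the punctured pieces agree on the gluing sphere \(\Sph^{n-1}\), whose spin structure is unique because \(n-1\geq2\). So they glue to a spin structure on \(N\#\Sigma\), which is again unique because \(H^1(N\#\Sigma;\Z/2)=0\).

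Finally, and this is the main point, we must show \(\alpha(N\#\Sigma)\neq0\). The spin connected sum is spin bordant to the disjoint union: the boundary connected sum of \(N\times[0,1]\) and \(\Sigma\times[0,1]\), that is, attaching a \(1\)-handle, gives a spin bordism. Hence \([N\#\Sigma]=[N]+[\Sigma]\) in \(\Omega_n^{\Spin}\). Since \(\alpha\) is a homomorphism \(\Omega^{\Spin}_n\to KO_n\cong\Z/2\), we get \(\alpha(N\#\Sigma)=\alpha(N)+\alpha(\Sigma)=0+\alpha(\Sigma)\neq0\).

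The only delicate step is checking that this bordism is spin. The \(1\)-handle is attached along \(\Sph^0\times D^n\), and its attaching region can always be given compatible spin structures: the spin structure extends across a \(1\)-handle because \(D^1\times D^n\) is contractible, and the orientation choices can be matched. Lemma~\ref{lem:Ric-obstruction} then rules out any metric with \(\Ric\geq0\) on \(N\#\Sigma\), completing the proof.
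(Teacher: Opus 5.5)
Your proposal is correct and follows the paper's proof essentially step for step. Existence comes from the weighted core metric on \(\Sigma\) combined with the weighted connected-sum theorem, which is the forward direction of Theorem~\ref{thm:theta-stability}. Nonexistence comes from additivity of \(\alpha\) under the \(1\)-handle spin bordism from \(N\sqcup\Sigma\) to \(N\#\Sigma\), followed by Lemma~\ref{lem:Ric-obstruction}; your explicit van Kampen, Mayer--Vietoris and spin-structure checks just fill in what the paper states in one line.
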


\begin{proof}
By Theorem~\ref{thm:theta-stability}, \(N\#\Sigma\) admits a weighted
metric with \(\Ric_f>0\).  The spin Dirac index is additive under
connected sum.  Indeed, removing spin disks and inserting the standard
one-handle gives a spin bordism from the disjoint union
\(N\sqcup\Sigma\) to \(N\#\Sigma\); the \(KO\)-index is a spin-bordism
homomorphism.  Therefore
\[
             \alpha(N\#\Sigma)
                  =\alpha(N)+\alpha(\Sigma)
                  =\alpha(\Sigma)\neq0.
\]
Connected sum with a homotopy sphere preserves simple connectivity
and real homology.  Hence \(N\#\Sigma\) is again a simply connected
spin rational homology sphere, and
Lemma~\ref{lem:Ric-obstruction} rules out \(\Ric\geq0\).
\end{proof}

\begin{remark}
If \(N\) already carries a metric with \(\Ric>0\), take \(f\) to be
constant.  The Lichnerowicz theorem gives \(\alpha(N)=0\), so all the
hypotheses of Theorem~\ref{thm:connected-sum-examples} are satisfied.
The standard sphere gives the simplest choice and recovers
Theorem~\ref{thm:main}.
\end{remark}

\section{Finite-dimensional Bakry--\'Emery curvature}
\label{sec:finite-q}

For \(q\in(0,\infty)\), recall that
\[
\Ric_f^q=\Ric_f-\frac{1}{q}df\otimes df.
\]
On an \(n\)-dimensional manifold, this is the Bakry--\'Emery tensor
corresponding to the total curvature-dimension parameter \(N=n+q\).

\begin{lemma}\label{lem:sharp-fixed-q}
Let \(M\) be closed and suppose that \(\Ric_f>0\). Then
\(\Ric_f^q>0\) for every sufficiently large finite \(q\).
\end{lemma}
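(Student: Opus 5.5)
The plan is a compactness argument on the unit sphere bundle of \(M\). The two tensors involved are the positive tensor \(\Ric_f\) and the bounded perturbation \(df\otimes df\).

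First, since \(M\) is closed and \(\Ric_f\) is a smooth symmetric \(2\)-tensor with \(\Ric_f>0\), there is \(\varepsilon>0\) such that \(\Ric_f\geq\varepsilon g\) on \(M\). This is Lemma~\ref{lem:uniform-path}\textup{(iii)} applied to a constant family, or equivalently the statement that the smallest eigenvalue of \(\Ric_f\) relative to \(g\) is a continuous positive function on a compact set and so has a positive minimum.

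Second, \(f\) is smooth on a compact manifold, so \(C_f:=\max_M|df|_g^2<\infty\). For every tangent vector \(v\), Cauchy--Schwarz gives
\[
(df\otimes df)(v,v)=df(v)^2\leq |df|_g^2|v|_g^2\leq C_f|v|_g^2.
\]

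Combining the two bounds yields
\[
\Ric_f^q(v,v)\geq\Bigl(\varepsilon-\frac{C_f}{q}\Bigr)|v|_g^2,
\]
which is positive for every nonzero \(v\) as soon as \(q>C_f/\varepsilon\). If \(C_f=0\), every \(q>0\) works. This proves the statement for all \(q\) above an explicit threshold.

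There is no real obstacle here. The only point that needs care is the uniform lower bound \(\varepsilon\), and that follows from compactness. The bound must be uniform because pointwise positivity of \(\Ric_f\) alone does not give a single \(q\) that works everywhere.
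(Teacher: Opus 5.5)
Your proposal is correct and is essentially the paper's proof: compactness gives \(\Ric_f\geq\varepsilon g\) and \(df\otimes df\leq Cg\), so \(\Ric_f^q\geq(\varepsilon-C/q)g>0\) once \(q>C/\varepsilon\). Your added remarks (the Cauchy--Schwarz step, the case \(C_f=0\), and why a uniform \(\varepsilon\) is needed) only spell out details the paper leaves implicit.
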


\begin{proof}
By compactness, there are constants \(\varepsilon>0\) and \(C<\infty\)
such that \(\Ric_f\geq\varepsilon g\) and \(|df|^2\leq C\). Since
\(df\otimes df\leq Cg\), it follows that
\[
\Ric_f^q\geq\left(\varepsilon-\frac{C}{q}\right)g>0
\]
whenever \(q>C/\varepsilon\).
\end{proof}

For a closed manifold \(M\), define
\[
q_{\mathrm{BE}}(M)
:=
\inf\left\{
q>0:
M\text{ admits a weighted metric with }\Ric_f^q>0
\right\},
\]
with the convention that the infimum of the empty set is \(+\infty\).
Pullback by diffeomorphisms shows that \(q_{\mathrm{BE}}\) is a
diffeomorphism invariant.

The set of admissible values of \(q\) is upward closed. Indeed, if
\(\Ric_f^q>0\) and \(q'>q\), then
\[
\Ric_f^{q'}
=
\Ric_f^q+
\left(\frac{1}{q}-\frac{1}{q'}\right)df\otimes df
>0.
\]
Consequently, if \(q_{\mathrm{BE}}(M)<\infty\), then \(M\) admits such
a weighted metric for every \(q>q_{\mathrm{BE}}(M)\). The infimum,
however, need not itself be admissible.

\begin{theorem}\label{thm:finite-q-twisted}
Every homotopy sphere \(\Sigma^n\), \(n\geq7\), admits a weighted core
metric with respect to every sufficiently large finite \(q\).
Consequently,
\[
q_{\mathrm{BE}}(\Sigma)<\infty.
\]
\end{theorem}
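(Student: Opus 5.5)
By Corollary~\ref{cor:all-homotopy-spheres}, every homotopy sphere \(\Sigma^n\) with \(n\geq7\) carries a weighted core metric \((g,e^{-f})\) with \(\Ric_f>0\). Such a metric contains an isometrically embedded unit round hemisphere \(D^n\hookrightarrow\Sigma\), and \(f\) is constant on that hemisphere. The plan is to apply Lemma~\ref{lem:sharp-fixed-q} to this same closed weighted manifold and check that the core structure survives the passage from \(q=\infty\) to large finite \(q\).

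First, \(\Sigma\) is closed, so Lemma~\ref{lem:sharp-fixed-q} gives constants \(\varepsilon>0\) and \(C<\infty\) with \(\Ric_f\geq\varepsilon g\) and \(|df|^2\leq C\). It then yields \(\Ric_f^q>0\) for every \(q>C/\varepsilon\). The definition of a weighted core metric with respect to \(q\) asks only for \(\Ric_f^q>0\) together with the hemisphere and the constancy of \(f\) on it. Neither of the last two conditions depends on \(q\), so the same pair \((g,f)\) is a weighted core metric with respect to every \(q>C/\varepsilon\). The inequality \(q_{\mathrm{BE}}(\Sigma)\leq C/\varepsilon<\infty\) follows directly from the definition of \(q_{\mathrm{BE}}\).

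Alternatively, one could rerun the construction with finite \(q\): on the cylinder, the normal block becomes \(A_q=A-Q^2t^2/q\) by Lemma~\ref{lem:blocks}, so one would take \(q\gg Q^2L^2\). One would then apply the \(q\)-versions of Theorems~\ref{thm:gluing} and~\ref{thm:core-completion}. This route is unnecessary, since the compactness argument above is immediate.

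The only point needing care is confirming that the core condition is purely metric and does not interact with \(q\), which the definition makes clear. A minor additional remark is that Lemma~\ref{lem:sharp-fixed-q} requires the metric to be smooth on the closed manifold. This holds because core completion outputs a smooth closed weighted metric, so I expect no genuine obstacle.
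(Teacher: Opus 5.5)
Your proposal is correct and is essentially the paper's own proof. The paper likewise takes the weighted core metric from Corollary~\ref{cor:all-homotopy-spheres}, applies Lemma~\ref{lem:sharp-fixed-q} to get \(\Ric_f^q>0\) for large finite \(q\), and notes that the core condition survives because \(f\) is constant on the round hemisphere; your finite-\(q\) rerun of the construction is an unnecessary aside.
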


\begin{proof}
By Corollary~\ref{cor:all-homotopy-spheres}, \(\Sigma\) admits a
weighted core metric with \(\Ric_f>0\). Applying
Lemma~\ref{lem:sharp-fixed-q} to this metric gives
\(\Ric_f^q>0\) for every sufficiently large finite \(q\). The core
condition is preserved because \(f\) is constant on the round
hemisphere.
\end{proof}

\begin{corollary}\label{cor:effective-hitchin}
Let \(\Sigma\) be a Hitchin sphere. Then \(\Sigma\) admits no weighted
metric with \(\Ric_f^q>0\) for any \(0<q\leq4\), whereas it admits such
a metric for every sufficiently large finite \(q\). Consequently,
\[
4\leq q_{\mathrm{BE}}(\Sigma)<\infty.
\]
Moreover, \(\Sigma\) admits no Riemannian metric with
\(\Ric\geq0\). In contrast,
\[
q_{\mathrm{BE}}(\Sph^n)=0.
\]
In particular, in every dimension in which Hitchin spheres exist,
\(q_{\mathrm{BE}}\) distinguishes the standard smooth sphere from a
Hitchin sphere.
\end{corollary}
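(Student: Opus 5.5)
The corollary combines four ingredients, and I will handle them one at a time.

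\emph{The obstruction for $0<q\leq4$.} This is quoted from Reiser--Tripaldi \cite[Proposition~A.2]{ReiserTripaldi}, as already mentioned in the introduction. The underlying mechanism is a weighted Schr\"odinger--Lichnerowicz formula for a conformally twisted Dirac operator. If $\Ric_f^q>0$ with $q\leq4$, then a suitable weighted scalar curvature is positive, and this forces the $KO$-valued index to vanish. I will state the precise hypothesis of that proposition and observe that it applies to a closed spin manifold with $\alpha\neq0$. Since homotopy spheres of dimension at least three have a unique spin structure, any Hitchin sphere $\Sigma$ qualifies. It follows that no $q\in(0,4]$ is admissible, and hence $q_{\mathrm{BE}}(\Sigma)\geq4$.

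\emph{The upper bound.} Hitchin spheres occur only in dimensions $8k+1$ and $8k+2$ with $k\geq1$, so $n\geq9$. Theorem~\ref{thm:finite-q-twisted} then gives admissibility for every sufficiently large finite $q$, and therefore $q_{\mathrm{BE}}(\Sigma)<\infty$.

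\emph{No metric with $\Ric\geq0$.} A homotopy sphere is closed, simply connected, spin and an integral homology sphere. This is exactly the situation of Lemma~\ref{lem:Ric-obstruction}, which I apply directly, as in the proof of Theorem~\ref{thm:main}.

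\emph{The standard sphere.} I take the round metric with $f$ constant. Then $df=0$, so $\Ric_f^q=\Ric_{g_\circ}=(n-1)g_\circ>0$ for every $q>0$. Thus every $q>0$ is admissible, and the infimum is $0$. The final sentence follows because $q_{\mathrm{BE}}$ is a diffeomorphism invariant, as noted after its definition: the values $0$ and $\geq4$ differ, so $\Sigma$ is not diffeomorphic to $\Sph^n$.

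The main obstacle is the first step, namely making sure that the cited Reiser--Tripaldi obstruction really covers the closed range $q\leq4$ with the same normalization of $\Ric_f^q$ used here. The endpoint $q=4$ is the delicate case. If the citation only gives $q<4$, the inequality $q_{\mathrm{BE}}\geq4$ still follows, since the infimum is unaffected. The claim for $q=4$ itself would then need the weighted Lichnerowicz argument redone at the borderline constant. I would try to do this first via the twisted operator $e^{f/2}\D e^{-f/2}$ together with a Cauchy--Schwarz estimate on the $\nabla f$ term, so that the coefficient $1/q$ is absorbed exactly at $q=4$.
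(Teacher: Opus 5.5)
Your proposal is correct and matches the paper's proof essentially step for step. The paper cites Reiser--Tripaldi Proposition~A.2 for \(0<q\le4\), Theorem~\ref{thm:finite-q-twisted} for large \(q\), Lemma~\ref{lem:Ric-obstruction} for \(\Ric\ge0\), and the round metric with constant potential for \(q_{\mathrm{BE}}(\Sph^n)=0\); your remarks that \(n\ge9\) and that \(q_{\mathrm{BE}}\) is a diffeomorphism invariant make explicit what the paper leaves implicit. Your endpoint worry is harmless: admissible \(q\) are upward closed, so it suffices to rule out \(q=4\), and the trace of \(\Ric_f^4>0\) is exactly the positive weighted scalar curvature \(\Scal+2\Delta(f/2)-|\nabla(f/2)|^2\) for the correspondingly conjugated Dirac operator.
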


\begin{proof}
Suppose that \(\Sigma\) admitted a weighted metric with
\(\Ric_f^q>0\) for some \(q\leq4\). By
\cite[Proposition~A.2]{ReiserTripaldi}, the \(\alpha\)-invariant of
\(\Sigma\) would vanish, contradicting the definition of a Hitchin
sphere. Thus no such metric exists for \(0<q\leq4\).

Existence for every sufficiently large finite \(q\), and hence the
finiteness of \(q_{\mathrm{BE}}(\Sigma)\), follows from
Theorem~\ref{thm:finite-q-twisted}. The obstruction to a Riemannian
metric with nonnegative Ricci curvature is
Lemma~\ref{lem:Ric-obstruction}. Finally, taking \(f\) constant on the
standard round sphere gives \(\Ric_f^q=\Ric>0\) for every \(q>0\), and
therefore \(q_{\mathrm{BE}}(\Sph^n)=0\).
\end{proof}

Notice that Corollary~\ref{cor:effective-hitchin} excludes the endpoint
\(q=4\). Since \(q_{\mathrm{BE}}\) is defined as an infimum, this does
not imply the strict inequality \(q_{\mathrm{BE}}(\Sigma)>4\). Equality
\(q_{\mathrm{BE}}(\Sigma)=4\) would mean that weighted metrics with
\(\Ric_f^q>0\) exist for every \(q>4\), although no such metric exists
at \(q=4\).

In the usual curvature-dimension notation, \(q=N-n\) measures the
excess of the effective dimension over the manifold dimension. Thus
\(q_{\mathrm{BE}}\) may be viewed as an excess-dimension threshold,
while \(n+q_{\mathrm{BE}}\) gives the corresponding total effective
dimension. The preceding corollary shows that this threshold already
detects some smooth-topological information. It remains to understand
whether it can distinguish different exotic smooth structures in a
fixed dimension and how its value varies with the dimension.

\begin{question}\label{rem:q-four}
Does every Hitchin sphere \(\Sigma\) admit a weighted metric with
\(\Ric_f^q>0\) for every \(q>4\), or equivalently,
is \(q_{\mathrm{BE}}(\Sigma)=4\)? More generally, how does
\(q_{\mathrm{BE}}\) vary among exotic homotopy spheres of a fixed
dimension, and how does this behavior depend on the dimension?
\end{question}

\section*{Acknowledgments}

The author would like to express his sincere gratitude to his advisor,
Professor Meng Zhu, for his continued guidance, encouragement, and
support. The author is also grateful to Professor Guofang Wei for
helpful discussions and for her valuable and encouraging comments on
the manuscript.

\section*{Declaration of generative AI and AI-assisted technologies
	in the manuscript preparation process}

During the preparation of this work, the author used OpenAI's ChatGPT
to assist with language editing and with improving the readability,
clarity, and internal consistency of the mathematical exposition.

Beyond language editing, ChatGPT's assistance was limited to the
topological selection and presentation of the examples. In particular,
it suggested considering smooth homotopy spheres with nonzero
$\alpha$-invariant, commonly called Hitchin spheres, and noted that
the $h$-cobordism theorem gives a twisted-sphere presentation
$\Sigma=D^n\cup_\phi D^n$. These observations helped organize the
application of weighted gluing methods to the two disks and their
common boundary. ChatGPT was also used for preliminary checks of
notation and a small number of routine calculations.

All AI-assisted suggestions were reviewed and edited as necessary by
the author before being incorporated into the manuscript. The author
takes responsibility for the final content of the work.

\end{document}